\let\doendproof\endproof
\renewcommand\endproof{~\hfill\qed\doendproof}
\newcommand{\qn}{\ensuremath{\operatorname{qn}}}
\let\leq\leqslant
\let\geq\geqslant
\let\epsilon\varepsilon
\let\setminus\smallsetminus
\DeclareMathOperator{\width}{width}
\title{The Queue-Number of Posets of Bounded Width or Height}
\author{Kolja Knauer\inst{1}\thanks{Kolja Knauer was supported by ANR projects GATO: ANR-16-CE40-0009-01 and DISTANCIA: ANR-17-CE40-0015}, Piotr Micek\inst{2}
\thanks{Piotr Micek was partially supported by the National Science Center of Poland under grant no.\ 2015/18/E/ST6/00299.}
and Torsten Ueckerdt\inst{3}}
\institute{Aix Marseille Univ, Universit\'e de Toulon, CNRS, LIS, Marseille, France \email{kolja.knauer@lis-lab.fr} \and  Jagiellonian University,
Faculty of Mathematics and Computer Science, Theoretical Computer Science Department, Poland
\email{piotr.micek@tcs.uj.edu.pl} 
\and Karlsruhe Institute of Technology (KIT), 
Institute of Theoretical Informatics, Germany
\email{torsten.ueckerdt@kit.edu}}
\begin{document}

\maketitle

\begin{abstract}
 Heath and Pemmaraju~\cite{HP-97} conjectured that the queue-number of a poset is bounded by its width and if the poset is planar then also by its height. We show that there are planar posets whose queue-number is larger than their height, refuting the second conjecture. On the other hand, we show that any poset of width $2$ has queue-number at most $2$, thus confirming the first conjecture in the first non-trivial case. Moreover, we improve the previously best known bounds and show that planar posets of width $w$ have queue-number at most $3w-2$ while any planar poset with $0$ and $1$ has queue-number at most its width.
\end{abstract}

\section{Introduction}
\label{sec:introduction}

A \emph{queue layout} of a graph consists of a total ordering on its vertices
and an assignment of its edges to \emph{queues}, such that no two edges in a single queue are nested.
The minimum number of queues needed in a queue layout of a graph $G$ is called its \emph{queue-number} and denoted by $\qn(G)$.

To be more precise, let $G$ be a graph and let $L$ be a linear order on the vertices of $G$.
We say that the edges $uv,u'v'\in E(G)$ are \emph{nested} with respect to $L$ if
$u< u' < v'<v$ or $u'<u<v<v'$ in $L$.
Given a linear order $L$ of the vertices of $G$, the edges $u_1v_1,\ldots, u_kv_k$ of $G$ form a \emph{rainbow} of size $k$ if $u_1 <\cdots<u_k<v_k<\cdots<v_1$ in $L$.
Given $G$ and $L$, the edges of $G$ can be partitioned into $k$ queues if and only if there is no rainbow of size $k+1$ in $L$, see~\cite{HR-92}.

The queue-number was introduced by Heath and Rosenberg in 1992~\cite{HR-92} as an analogy to book embeddings.
Queue layouts were implicitly used before and have applications in fault-tolerant processing, sorting with parallel queues, matrix computations, scheduling parallel processes, and communication management in distributed algorithm (see~\cite{HLR-92,HR-92,NOdMW12}).

Perhaps the most intriguing question concerning queue-numbers is whether planar graphs have bounded queue-number.
\begin{conjecture}[Heath and Rosenberg~\cite{HR-92}]\label{conj:queue-planar-graphs}{\ \\}
 The queue-number of planar graphs is bounded by a constant.
\end{conjecture}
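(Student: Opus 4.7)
The plan is to attack this famous open problem via a \emph{product structure} decomposition of planar graphs. The high-level goal is to express every planar graph $G$ as a subgraph of a strong product $H \boxtimes P$, where $H$ belongs to a well-behaved class, such as graphs of bounded treewidth, and $P$ is a path. Given such a decomposition, a queue layout of $G$ can be assembled by composing a good queue layout of $H$ with the natural path order of $P$, where the $P$-coordinate acts as a layering that localises the rainbows one needs to control.

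First I would reduce to two cleaner subproblems. The first is to show that graphs of bounded treewidth have bounded queue-number; this is essentially the Dujmović--Pór--Wood result, giving a bound of the form $\qn(G) \le 2^{O(\width G)}$ via an intermediate track layout. The second is a product lemma asserting that $\qn(H \boxtimes P)$ is at most linear in $\qn(H)$, which I would establish by interleaving the per-copy layouts of $H$ along $P$ and carefully bounding the rainbows that cross several layers of $P$.

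The hard part, and the reason the conjecture has resisted attack for so long, is proving the product structure theorem itself: that every planar graph is a subgraph of $H \boxtimes P \boxtimes K_\ell$ for some $H$ of bounded treewidth, some path $P$, and some constant $\ell$. I would attempt this by combining a BFS layering of $G$, so that each pair of consecutive layers induces a graph of small treewidth, with a shelling-style decomposition across layers that exploits the planar separator theorem and the hereditary structure of triangulations. As a fallback, if the product route fails, one could try to build a queue layout directly from a canonical planar order such as a Schnyder realizer or a canonical ordering of a triangulation, bounding rainbows by the combinatorics of the three spanning trees; however, all known direct attacks of this kind have yielded only $2^{O(\sqrt{\log n})}$ queues, so I would invest most effort in the product-structure route.
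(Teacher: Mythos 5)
You should first be clear about the status of this statement in the paper: it is Conjecture~\ref{conj:queue-planar-graphs}, an \emph{open problem} that the paper never proves. The paper's only result about it is Theorem~\ref{thm:equivalent-conjectures}, which shows the conjecture is \emph{equivalent} to several poset statements (e.g., planar posets of height $2$ having bounded queue-number); no proof of the conjecture itself is offered or claimed. So the only question is whether your proposal constitutes a self-contained proof, and it does not.

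Your two preliminary reductions are sound: that bounded treewidth implies bounded queue-number is the Dujmovi\'c--P\'or--Wood theorem, and the product lemma $\qn(H \boxtimes P) = O(\qn(H))$ can indeed be proved by interleaving the copies of $H$ along the path. But the entire difficulty of the conjecture is concentrated in the step you explicitly defer: the product structure theorem, that every planar graph embeds in $H \boxtimes P \boxtimes K_\ell$ with $H$ of bounded treewidth. Saying ``I would attempt this by combining a BFS layering \ldots with a shelling-style decomposition that exploits the planar separator theorem'' is a research plan, not a proof, and the specific plan you sketch is known to fall short: BFS layering shows that planar graphs have bounded \emph{layered treewidth}, and separator-based arguments were exactly what yielded the previously best bounds of $O(\log n)$ and $2^{O(\sqrt{\log n})}$ queues --- neither gives a partition of the vertex set into parts, one per vertex of a bounded-treewidth graph $H$, each part confined to a bounded number of consecutive layers. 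The proof of the product structure theorem that was eventually found (by Dujmovi\'c, Joret, Micek, Morin, Ueckerdt and Wood, after this paper) requires a genuinely new device --- partitioning a triangulation into vertical paths (tripods) whose quotient has bounded treewidth --- which does not follow from separators or from the hereditary structure of triangulations in the way you suggest. So while your strategy is, with hindsight, the historically correct route, your proposal leaves the conjecture's central obstacle untouched and therefore contains a genuine gap.
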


In this paper we study queue-numbers of posets.
The parameter was introduced in 1997 by Heath and Pemmaraju~\cite{HP-97} and the main idea is that given a poset one should lay it out respecting its relation.
Two elements $a,b$ of a poset are called \emph{comparable} if $a < b$ or $b < a$, and \emph{incomparable}, denoted by $a \parallel b$, otherwise.
Posets are visualized by their \emph{diagrams}:
Elements are placed as points in the plane and whenever $a<b$ in the poset, and there is no element $c$ with $a<c<b$,
there is a curve from $a$ to $b$ going upwards (that is $y$-monotone).
We denote this case as $a\prec b$.
The diagram represents those relations which are essential in the sense that they are not implied
by transitivity, also known as \emph{cover relations}.
The undirected graph implicitly defined by such a diagram is the \emph{cover graph} of the poset.
Given a poset $P$, a \emph{linear extension} $L$ of $P$ is a linear order on the elements of $P$ such that $x <_L y$, whenever $x <_P y$. (Throughout the paper we use a subscript on the symbol $<$, if we want to emphasize which order it represents.)
Finally, the \emph{queue-number of a poset} $P$, denoted by $\qn(P)$, is the smallest $k$ such that there is a linear extension $L$ of $P$ for which the resulting linear layout of $G_P$ contains no $(k+1)$-rainbow.
Clearly we have $\qn(G_P)\leq \qn(P)$, i.e., the queue-number of a poset is at least the queue-number of its cover graph. 
It is shown in~\cite{HP-97} that even for \emph{planar posets}, that is posets admitting crossing-free diagrams, there is no function $f$ such that $\qn(P)\leq f(\qn(G_P))$.
\begin{figure}
  \centering
  \includegraphics{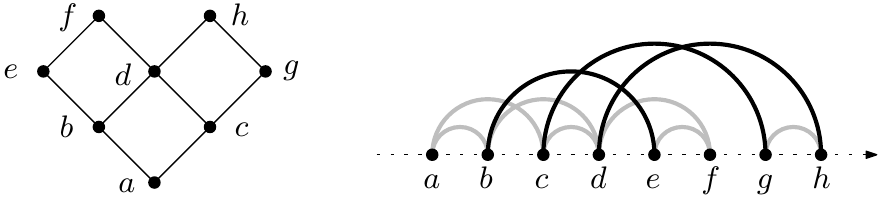}
  \caption{A poset and a layout with two queues (gray and black). Note that the order of the elements on the spine is a linear extension of the poset.}
 \end{figure}

Heath and Pemmaraju~\cite{HP-97} investigated the maximum queue-number of several classes of posets, in particular with respect to bounded width (the maximum number of pairwise incomparable elements) and height (the maximum number of pairwise comparable elements). 
A set with every two elements being comparable is a \emph{chain}.
A set with every two distinct elements being incomparable is an \emph{antichain}.
They proved that if $\width(P)\leq w$, then $\qn(P)\leq w^2$.
The lower bound is attained by \emph{weak orders}, i.e., chains of antichains and is conjectured to be the upper bound as well:
\begin{conjecture}[Heath and Pemmaraju~\cite{HP-97}]\label{conj:general-width}{\ \\}
 Every poset of width $w$ has queue-number at most $w$.
\end{conjecture}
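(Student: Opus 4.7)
The plan is to leverage a Dilworth chain decomposition and design a linear extension $L$ that hugs it so tightly that any large nested family of cover edges is forced to come from a large antichain. By Dilworth's theorem, partition $P$ into $w$ chains $C_1,\ldots,C_w$. To each element $x\in P$ I would associate the profile $\pi(x)=(p_1(x),\ldots,p_w(x))$, where $p_i(x)$ is the largest element of $C_i$ with $p_i(x)\leq_P x$ (or a bottom symbol $\bot$ if none exists). Profiles are $P$-monotone, so ordering elements lexicographically by $\pi$, breaking ties by chain index, produces a linear extension. A refinement I would attempt is a round-robin merge: at each step append the $P$-minimum unplaced element of the chain whose profile has advanced least recently, inspired by the merging tricks that drive the width-$2$ case resolved in this paper.

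Having fixed $L$, I would suppose for contradiction that the cover graph under $L$ admits a rainbow of cover edges $u_1v_1,\ldots,u_{w+1}v_{w+1}$ with $u_1<_L\cdots<_L u_{w+1}<_L v_{w+1}<_L\cdots<_L v_1$. The goal is to certify that $\{v_1,\ldots,v_{w+1}\}$ is an antichain, contradicting $\width(P)=w$. Because $L$ is a linear extension, $v_j<_L v_i$ with $i<j$ immediately rules out $v_i<_P v_j$, so any comparability between $v_i$ and $v_j$ must point in the direction $v_j<_P v_i$. The heart of the proof is then to show that the merging rule used to build $L$ rules out exactly this downward comparability: such a $v_j<_P v_i$ would, via transitivity through $u_i\prec v_i$, force $u_i<_P v_j$, which together with the placement rule forces $u_i<_L v_j<_L u_{i+1}$, contradicting the rainbow ordering. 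A symmetric argument on the $u$-side serves as a backup whenever the $v$-side analysis fails in some configuration.

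The main obstacle is precisely the last implication of the previous paragraph. The profile vector $\pi(x)$ records only the largest predecessors in each chain, but excluding $v_j<_P v_i$ may hinge on transitive comparabilities that pass through chains not containing either $v_j$ or $v_i$, and the merging rule controls only the ``nearest'' chain interactions. In the width-$2$ setting the profile lives in two coordinates and the possible nesting patterns of cover edges can be exhaustively enumerated, which is why the paper can confirm the conjecture there; for general $w$ the case analysis explodes combinatorially, and a strictly finer invariant than $\pi$ --- plausibly one tracking, for each element, the set of chains that host a shortest upward $P$-path into it --- appears to be needed before the rainbow-to-antichain extraction can be pushed through uniformly, and identifying such an invariant is where I expect the main difficulty to lie.
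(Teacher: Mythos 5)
The statement you set out to prove is Conjecture~\ref{conj:general-width}, which the paper does \emph{not} prove and which remains open: the paper confirms it only in special cases (width $2$, Theorem~\ref{thm:general-width-2}; posets with no embedded subdivided crown, Theorem~\ref{thm:subdivided-crowns}, covering interval orders, series-parallel orders, and planar posets with $0$ and $1$, Corollary~\ref{cor:interval}), and in general only establishes quadratic bounds (Proposition~\ref{prop:Heath-width}, Corollary~\ref{cor:general-width}). Your proposal is likewise not a proof, and you say so yourself: the key implication --- that your merge rule excludes the downward comparability $v_j <_P v_i$ among the right endpoints of a rainbow --- is exactly what you cannot establish. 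That gap is genuine, not a technicality that more bookkeeping would fix.

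In fact the paper makes the obstruction to your plan precise. Your strategy is to construct a linear extension under which one side of every rainbow is forced to be an antichain; this is exactly the strategy of Theorem~\ref{thm:subdivided-crowns}, implemented there by augmenting the cover graph with ``gray edges'' (an edge $x \to y$ whenever some $z$ has a cover relation $z \prec x$ and a non-cover relation $z < y$) and taking a topological ordering of the resulting digraph. Such an ordering --- i.e., the invariant you are looking for --- exists precisely when that digraph is acyclic, and the paper shows that a directed cycle forces an embedded subdivided crown $P_k$. Conversely, if $P$ has an embedded $P_k$, then \emph{every} linear extension contains a $2$-rainbow whose left endpoints are comparable (take the cover $a_{m+1} \prec c_m$ where $c_m$ is the last of the $c_i$'s in the extension, and any cover strictly inside the chain from $a_{m+1}$ through $b_{m+1}$ to $c_{m+1}$), so no ordering rule --- lexicographic by profile, round-robin, or otherwise --- can make the antichain-extraction argument work uniformly on one fixed side; and since a poset may contain both a crown and a dual crown, your ``symmetric backup on the $u$-side'' does not rescue the argument either. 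A smaller correction: the paper's width-$2$ proof does not use a round-robin merge but a \emph{lazy} linear extension, which stays on the same chain as long as possible (essentially the opposite rule), and it bounds rainbows by a block analysis rather than by making one side of the rainbow an antichain.
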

Furthermore, they made a step towards this conjecture for planar posets:
if a planar poset $P$ has $\width(P)\leq w$, then $\qn(P)\leq 4w-1$.
For the lower bound side they provided planar posets of width $w$ and queue-number $\lceil\sqrt{w}\rceil$.

We improve the bounds for planar posets and get the following:
\begin{theorem}\label{thm:planar-width}
 Every planar poset of width $w$ has queue-number at most $3w-2$.
 Moreover, there are planar posets of width $w$ and queue-number $w$.
\end{theorem}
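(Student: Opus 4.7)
The plan is to fix a plane drawing of the diagram of $P$ (which exists since $P$ is planar) and build a linear extension $L$ of $P$ that sweeps this drawing from left to right. A natural candidate for $L$ is the extension obtained by iteratively removing the leftmost minimal element of the current sub-poset, where ties among minimal elements are broken by $x$-coordinate in the fixed embedding. In parallel, I would fix a chain partition $\mathcal{C}=\{C_1,\dots,C_w\}$ of $P$ witnessing $\width(P)\le w$; ideally the partition is induced by the planar embedding (for example via leftmost maximal paths) so that the chains interact predictably with $L$.

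I would then suppose for contradiction that $L$ produces a rainbow of cover edges $u_0v_0,\dots,u_kv_k$ of size $k+1=3w-1$, with $u_0<_L\cdots<_L u_k<_L v_k<_L\cdots<_L v_0$, and label each edge by the ordered pair of chains in $\mathcal{C}$ containing $u_i$ and $v_i$. Because any two elements of a fixed chain are comparable in $L$, the $L$-increasing sequence of lower endpoints and the $L$-decreasing sequence of upper endpoints each hit every chain in a contiguous range, which already limits how often a label may repeat. Planarity of the diagram should then forbid further coincidences: two rainbow edges sharing both chain labels would either have to cross in the drawing or to force an intermediate cover, contradicting that the rainbow edges are themselves cover relations.

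The main obstacle is turning these constraints into the exact bound $3w-2$. My plan is to split the rainbow edges into three classes according to the role each plays (for instance, witnessing a chain as the bottom side of the edge, as the top side of the edge, or serving as an outer boundary edge between two consecutive chain-ranges) and argue that each of the $w$ chains absorbs at most one rainbow edge in each class. The $-2$ saving should come from separate treatment of the two outermost edges $u_0v_0$ and $u_kv_k$, whose endpoints are fixed by the sweep and which share chain-labels with certain inner edges. If this accounting goes through, we obtain at most $3w-2$ rainbow edges, contradicting the assumption of $3w-1$.

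\textbf{Lower bound.} For the lower bound I need, for every $w\ge 1$, a planar poset $P_w$ with $\width(P_w)=w$ and $\qn(P_w)\ge w$. The straightforward example of a weak order of width $w$ fails for $w\ge 3$, as its cover graph contains $K_{w,w}$ and is therefore non-planar. Instead I would build $P_w$ from $w$ parallel chains of carefully chosen length, connected by a planarity-preserving pattern of cover relations that forces elements of different chains to interleave in any linear extension. A pigeonhole argument over the positions of chain elements in such a linear extension should then yield a $w$-rainbow. The principal difficulty is controlling planarity, width, and the forced rainbow size simultaneously while keeping the construction concrete and verifiable.
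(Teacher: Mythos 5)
Both halves of your proposal are plans rather than proofs, and in both cases the missing step is exactly the heart of the matter. For the upper bound, your accounting scheme (each chain absorbs at most one rainbow edge in each of three classes) is stated as a hope, not established, and there is no evidence that a leftmost sweep has this property: the only thing your chain-label argument actually delivers is that all labels in a rainbow are distinct, which is the known $w^2$ bound of Proposition~\ref{prop:Heath-width}, valid for \emph{every} linear extension. Getting from $w^2$ down to $O(w)$ requires a genuinely new structural idea, and the paper's is quite different from yours: augment the plane diagram by adding at most $2w-2$ new relations (one per minimum or maximum not on the outer face, drawn into the face with the obtuse angle) to obtain a planar poset $Q$ of the same width with $0$ and $1$; then $\qn(Q)\leq w$ follows from Theorem~\ref{thm:subdivided-crowns} via Corollary~\ref{cor:interval}, because planar posets with $0$ and $1$ have no embedded subdivided crown; finally, each cover of $P$ destroyed by the augmentation is charged to one of the $2w-2$ new edges, and each such class is nesting-free regardless of the linear order, giving $\qn(P)\leq \qn(Q)+2w-2\leq 3w-2$. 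Your sweep-plus-crossing argument also faces a concrete obstruction: two nested cover edges whose endpoints lie pairwise in the same chains need not cross in the diagram (the chains themselves need not be drawn as nested curves), so planarity does not directly forbid the coincidences you want to exclude.

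For the lower bound you correctly rule out weak orders, but you give no construction; you only describe the difficulty. The paper's construction (Proposition~\ref{prop:planar-width-LB}) is recursive rather than based on parallel chains and pigeonhole: $Q_w$ consists of two disjoint copies $P,P'$ of $Q_{w-1}$ placed in series ($1$ of $P$ below $0$ of $P'$), bracketed by a three-element chain $a\prec b\prec c$ with $a$ below everything and $c$ above everything. In any linear extension, $b$ either comes after all of $P$, in which case the whole copy $P$ is nested under the cover $a\prec b$, or before all of $P'$, in which case $P'$ is nested under $b\prec c$; induction then produces a $(w-1)$-rainbow inside the nested copy, and the bracketing cover makes it a $w$-rainbow. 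Without some such nesting mechanism your ``interleaving forces a $w$-rainbow'' claim has nothing to run on, so as it stands neither direction of the theorem is established.
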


As an ingredient of the proof we show that posets without certain subdivided crowns satisfy Conjecture~\ref{conj:general-width} (c.f.\ Theorem~\ref{thm:subdivided-crowns}).
This implies the conjecture for interval orders and planar posets with (unique minimum) 0 and (unique maximum) 1 (c.f.\ Corollary~\ref{cor:interval}). 
Moreover, we confirm Conjecture~\ref{conj:general-width} for the first non-trivial case $w=2$:
\begin{theorem}\label{thm:general-width-2}
 Every poset of width $2$ has queue-number at most $2$.
\end{theorem}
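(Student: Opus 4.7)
My plan is to construct an explicit linear extension $L$ of $P$ whose associated vertex ordering of the cover graph contains no $3$-rainbow. By Dilworth's theorem one may write $P = C_1 \cup C_2$ with $C_1, C_2$ chains. Each cover edge of $P$ is then either a \emph{chain cover} (both endpoints in the same $C_i$) or a \emph{cross cover} (the endpoints in different chains), with cross covers further subdivided by direction according to which chain contains the lower endpoint.

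Three structural observations hold for every linear extension of $P$ and will be my starting point. First, two chain covers lying in the same chain are sequentially ordered along that chain and hence never nest. Second, two cross covers of the same direction never nest: if they did, transitivity through the two chain orders would force an element strictly between the endpoints of the outer cover, contradicting the cover property. Combining these facts with a case analysis on the types of the three edges of a hypothetical $3$-rainbow, and invoking $\width(P)=2$ to exclude antichains of size three, I would reduce the problem to a single remaining configuration, which I will call a \emph{bad hexagon}: the innermost edge is a chain cover in some $C_i$; the outer two edges are cross covers of opposite directions; and the six rainbow endpoints are split so that four lie in $C_i$ and two lie in $C_{3-i}$, in a specific interleaving forced by the cover relations.

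The final step is to choose $L$ that never realises a bad hexagon. My proposal is to order the elements of $P$ first by their height $h(x)$ (the length of a longest chain of $P$ ending at $x$), breaking ties within each level by a chain-dependent rule. Because same-height elements form an antichain and $\width(P)=2$, every level contains at most two elements, and a level with two elements has exactly one in each chain; tie-breaking thus amounts to at most one binary choice per level. I would design the rule so that within each tied level the element whose role would be the leftmost vertex $u_1$ of some bad hexagon is never placed first. The main obstacle is to show that this rule is always well-defined: if both orderings of a tied pair led to bad hexagons, I would combine the two forbidden configurations to extract an antichain of size three in $P$, contradicting $\width(P)=2$.
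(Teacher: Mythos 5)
Your structural analysis up to the case distinction is sound (two chain covers in the same chain never nest; two cross covers of the same direction never nest), but the claimed reduction to a single remaining configuration is false, and this is a genuine gap. A second family of $3$-rainbows survives all of your constraints: a chain cover in $C_{3-i}$ nested inside a chain cover in $C_i$ nested inside a cross cover. Concretely, take $C_1=\{u_3<u_2<v_2\}$ and $C_2=\{u_1<v_1<v_3\}$ with the single cross relation $u_3<v_3$. This poset has width $2$, all of $u_1\prec v_1$, $u_2\prec v_2$, $u_3\prec v_3$ are cover relations, and the linear extension $u_3<_L u_2<_L u_1<_L v_1<_L v_2<_L v_3$ realizes them as a $3$-rainbow. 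Here the six endpoints split $3$--$3$ between the chains rather than $4$--$2$, the innermost and middle edges are chain covers in \emph{different} chains (so your first observation does not apply), only one edge is a cross cover (so your second observation does not apply), and no three of the six endpoints form an antichain (the three left endpoints contain the comparable pair $u_3<u_2$, the three right endpoints contain $v_1<v_3$), so $\width(P)=2$ does not exclude it either. Any ordering rule must therefore also prevent this configuration, and your tie-breaking design, which is tailored to the hexagon, does not address it.

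Second, even granting a corrected catalogue of forbidden configurations, the heart of your proof is missing: the claim that the level-by-level tie-breaking rule is well-defined (that both orderings of a tied pair cannot each be forced into a forbidden configuration) is exactly the difficult step, and you only state that you \emph{would} extract a $3$-antichain from two conflicting configurations. Since a forbidden configuration involves elements spread across many height levels, the binary choice at one level interacts with the choices made at all other levels, so well-definedness is not a single-level, local argument; as written it is a conjecture, not a proof. For comparison, the paper proceeds along a different route: it adds a global minimum, fixes a chain partition $C_1,C_2$, and builds a \emph{lazy} linear extension, one in which every element of each block (a maximal run of consecutive elements from one chain) is related to some element of the preceding block. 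A short analysis of which blocks contain the two endpoints of a cover then shows that every cover has either no cover nested above it or no cover nested below it, which immediately rules out three pairwise nesting covers. That greedy block structure delivers the global dichotomy that your height-based ordering would still have to establish.
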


An easy corollary of this is that all posets of width $w$ have queue-number at most $w^2-w+1$ (c.f.\ Corollary~\ref{cor:general-width}).

Another conjecture of Heath and Pemmaraju concerns planar posets of bounded height:
\begin{conjecture}[Heath and Pemmaraju~\cite{HP-97}]\label{conj:planar-height}{\ \\}
 Every planar poset of height~$h$ has queue-number at most $h$.
\end{conjecture}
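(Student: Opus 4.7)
The plan is to refute Conjecture~\ref{conj:planar-height} rather than to prove it, by constructing, for some fixed height $h$, a planar poset $P$ with $\qn(P) > h$. Since the smallest non-trivial case is $h=2$, my first attempt would be to exhibit a height-$2$ planar poset---i.e.\ a bipartite poset with planar Hasse diagram---that requires at least three queues. Note that in any linear extension $L$, each cover edge joins a minimum $a$ and a maximum $b$ with $a <_L b$, but an incomparable pair $(a',b')$ may appear in either order on the spine; this flexibility is exactly what the adversary exploits to avoid rainbows, and what our construction must defeat.

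For the construction itself I would look for a small gadget $X$ that, for every placement of its ``boundary'' elements on the spine, forces at least one cover edge to be nested inside another, and then combine many copies of such gadgets in a planar fashion---sharing boundary elements along a planar scaffold---so that every linear extension is driven into an $(h+1)$-rainbow. Natural candidates for the scaffold are deep zig-zag arrangements of incomparable pairs between two long antichains, or planar recursive constructions in the spirit of known lower-bound examples for stack- and track-numbers, adapted to posets with crossing-free Hasse diagrams of controlled height. A useful sanity check is that the width of such a construction is forced to grow with the desired queue-number, since Theorem~\ref{thm:planar-width} forbids large $\qn$ on posets of small width.

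The main obstacle is the lower bound: one must show that \emph{every} linear extension of the candidate poset contains a rainbow of size $h+1$. The natural tool is a pigeonhole or inductive case analysis on the positions of a few designated elements in each gadget, using that the planar embedding of the Hasse diagram restricts which vertex orderings are admissible as linear extensions. The delicate point is that the adversary may freely interleave elements of different gadgets, so the induction has to track something like ``which interval of the spine still hosts an unused copy of the gadget'' in a sufficiently robust way. Once such a lower bound is established for the smallest $h$ where it holds, one should try to iterate or scale the construction so that $\qn(P)-h$ grows as a function of $h$, giving not just a single counterexample but a family that shows Conjecture~\ref{conj:planar-height} fails quantitatively.
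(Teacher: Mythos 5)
Your overall strategy is the right one --- the paper indeed refutes Conjecture~\ref{conj:planar-height} at the smallest non-trivial height, exhibiting a planar poset of height $2$ with queue-number at least $4$ (Theorem~\ref{thm:planar-height-2}), and the lower bound there is exactly the kind of pigeonhole-over-all-linear-extensions argument you envision. However, your proposal stops at the level of a plan: it contains neither an explicit construction nor an actual lower-bound argument, and these are precisely where all the difficulty lies. Saying ``look for a small gadget that forces a nesting for every placement of its boundary elements, and combine many copies along a planar scaffold'' names the goal, not a proof; the delicate point you yourself flag (that the adversary can interleave elements of different gadgets arbitrarily) is never resolved.

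For comparison, the paper's construction is concrete: take $K_{2,10}$ with classes $\{a_1,a_2\}$ and $\{b_1,\ldots,b_{10}\}$, and for each $i=1,\ldots,9$ add four elements $c_{i,1},\ldots,c_{i,4}$ covering both $b_i$ and $b_{i+1}$; orient all edges from the $b$-side up, and invoke Moore's theorem that a planar bipartite graph yields a planar height-$2$ poset (a step your sketch omits --- planarity of the cover graph alone does not give a crossing-free \emph{diagram}, as the paper notes in the introduction). The lower bound then has two quantitative ingredients your sketch lacks: (1) since $|X|=10$, the first two and last two elements of $X$ in any linear extension $L$ avoid some consecutive pair $b_i,b_{i+1}$, which sandwiches the gadget $\{c_{i,1},\ldots,c_{i,4}\}$ strictly between an ``early'' $4$-cycle and a ``late'' $4$-cycle through $a_1,a_2$; and (2) since that gadget has four elements, two of them land on the same side of (or between) $a_1$ and $a_2$ in $L$, producing a $2$-rainbow that stacks with the covers into $a_1,a_2$ to give a $4$-rainbow in every case. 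Note also that your closing hope --- iterating the construction so that $\qn(P)-h$ grows --- cannot be pushed arbitrarily far at fixed height: by Theorem~\ref{thm:equivalent-conjectures}, unbounded queue-number for planar posets of height $2$ would contradict Conjecture~\ref{conj:queue-planar-graphs}, so any refutation must remain a bounded-gap counterexample unless one disproves that central conjecture as well.
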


We show that Conjecture~\ref{conj:planar-height} is false for the first non-trivial case $h=2$:
\begin{theorem}\label{thm:planar-height-2}
 There is a planar poset of height $2$ with queue-number at least $4$.
\end{theorem}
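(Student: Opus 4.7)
The plan is to exhibit an explicit planar poset $P$ of height $2$ and to show that in every linear extension $L$ of $P$ the cover graph $G_P$, ordered according to $L$, contains a rainbow of size $4$, thereby certifying $\qn(P) \geq 4$.

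Construction. I would search for a small planar bipartite graph $G$ with bipartition $A \cup B$ that, when interpreted as the Hasse diagram of the height-$2$ poset $P$ with $A$ as its minima and $B$ as its maxima, cannot be linearly extended without producing four pairwise nested cover edges. Concretely, I would start from a small planar height-$2$ ``rainbow-forcing gadget'' whose queue-number is already at least $2$, and combine several copies of such gadgets along shared vertices in a planar fashion so that the nesting constraints they impose on $L$ cannot all be resolved simultaneously with fewer than four nested edges. Planarity and height are then immediate from the explicit diagram.

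The lower bound. Fix any linear extension $L$ of $P$. I would identify a small set of anchor elements (a handful of minima and maxima whose relative positions govern the overall layout) and perform a case analysis on their relative order in $L$. Crucially, although incomparable minima and maxima may be freely interleaved, once the anchors are placed the comparabilities of $P$ force the relative order of many other elements: if a minimum $a$ is below a maximum $b$ then $a <_L b$, and I would use these forced orderings to locate four nested cover edges in each case. Symmetries of the construction (if designed in) should help collapse the number of cases.

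Main obstacle. The essential difficulty lies in the construction itself. Since planarity caps the number of cover edges of $G_P$ at roughly $2|V(G_P)|$, crude density or counting arguments cannot by themselves force a $4$-rainbow, and height $2$ offers little structural leverage on the ordering — the entire weight of the argument falls on choosing the right planar bipartite graph. Finding a small enough candidate to make the case analysis tractable (and perhaps verifying correctness by a computer search over all linear extensions of a fixed finite poset) is the real obstacle; once $P$ is fixed, the argument becomes a finite, mechanical, though potentially tedious, verification.
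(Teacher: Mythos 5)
Your proposal is a research plan, not a proof: it correctly identifies the paper's high-level strategy (take a planar bipartite graph, interpret one class as minima and the other as maxima to get a planar height-$2$ poset, and show every linear extension forces a $4$-rainbow), but it stops exactly where the mathematical content begins. You say you ``would search for'' a suitable gadget and ``would identify'' anchor elements, yet neither the poset nor the case analysis is ever produced, and you yourself concede that ``the essential difficulty lies in the construction itself.'' That difficulty is precisely what the statement requires one to resolve.

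For comparison, the paper's construction and argument are short but specific. The graph is $K_{2,10}$ with classes $\{a_1,a_2\}$ and $\{b_1,\dots,b_{10}\}$, augmented for each $i=1,\dots,9$ by four vertices $c_{i,1},\dots,c_{i,4}$ adjacent to $b_i$ and $b_{i+1}$; planarity of the resulting height-$2$ poset follows from Moore's theorem (planar bipartite graphs, oriented with one class below the other, yield planar posets --- a point that is \emph{not} automatic and which your sketch glosses over when asserting planarity is ``immediate''). The lower bound is then a clean double pigeonhole, not a brute-force check: since $|X|=10$, some consecutive pair $b_i,b_{i+1}$ lies strictly between the first two and the last two elements of $X$ in $L$; since there are four common neighbors $c_{i,j}$ of this pair, two of them fall into the same region relative to $a_1,a_2$ in $L$, and in each of the three resulting cases the $2$-rainbow from the $4$-cycle $[c_1,b_i,c_2,b_{i+1}]$ nests with a $4$-cycle or with covers of the $K_{2,10}$ part to give a $4$-rainbow. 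Note also that your fallback of ``verifying correctness by a computer search over all linear extensions'' is not realistic for a $46$-element poset; the whole point of the paper's design is that the redundancy (ten $b$'s, four $c$'s per pair) makes a two-line pigeonhole argument replace any such enumeration. Without an explicit poset and an argument of this kind, your proposal does not establish the theorem.
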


Furthermore, we establish a link between a relaxed version of Conjecture~\ref{conj:planar-height} and Conjecture~\ref{conj:queue-planar-graphs}, namely we show that the latter is equivalent to planar posets of height $2$ having bounded queue-number (c.f.\ Theorem~\ref{thm:equivalent-conjectures}).
On the other hand, we show that Conjecture~\ref{conj:planar-height} holds for planar posets with 0 and 1:
\begin{theorem}\label{thm:planar-height-0-and-1}
 Every planar poset of height $h$ with $0$ and $1$ has queue-number at most $h-1$.
\end{theorem}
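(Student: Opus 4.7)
The strategy is to construct a linear extension $L$ of $P$ from a planar embedding of its Hasse diagram and then bound the maximum rainbow size in $L$. Since $P$ is planar with $0$ and $1$, I would first fix a planar upward drawing of its Hasse diagram with $0$ at the bottom, $1$ at the top, and all cover edges drawn as $y$-monotone curves; this realises the Hasse diagram as an $st$-planar graph with a canonical embedding (unique up to reflection). From this embedding I would take $L$ to be the \emph{leftmost topological order}, obtained by iteratively appending the leftmost available element (in the planar embedding) whose predecessors in $P$ have all been placed.

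The central step is to show that $L$ contains no rainbow of size $h$. Suppose for contradiction that cover edges $u_1v_1, \dots, u_kv_k$ form a $k$-rainbow in $L$ with $k \ge h$, that is $u_1 <_L \cdots <_L u_k <_L v_k <_L \cdots <_L v_1$. I would argue that the $2k$ endpoints of the rainbow contain a chain of length $k+1$ in $P$, contradicting the assumption that $P$ has height $h$. For each consecutive nesting $u_iv_i \supset u_{i+1}v_{i+1}$, the leftmost property of $L$ combined with the planarity of the embedding is expected to force either $u_i <_P u_{i+1}$ or $v_{i+1} <_P v_i$: had $u_{i+1}$ been incomparable to $u_i$ in $P$, the leftmost rule would have placed $u_{i+1}$ before $u_i$, unless the planar embedding makes $u_{i+1}$ strictly to the right of $u_i$'s column, in which case the nested position of $v_{i+1}$ below $v_i$ forces $v_{i+1} <_P v_i$. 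Splicing these pairwise comparabilities with the cover relations $u_i \prec v_i$ then produces a chain of length $k+1$.

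The main obstacle will be this pairwise local argument together with the subsequent splicing: ensuring that the local comparabilities accumulate into a single monotone chain of length $k+1$ rather than a zig-zag between the $u$'s and $v$'s that only yields short subchains. Here the $0$ and $1$ of $P$ are essential, since they give the Hasse diagram its $st$-planar structure with a canonical embedding, which in turn makes the leftmost topological order well-defined and rigid enough to force the desired comparabilities. From a chain of length $k+1$ in a poset of height $h$ one concludes $k+1 \le h$, hence $k \le h-1$, and thus $\qn(P) \le h-1$.
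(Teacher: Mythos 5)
Your overall strategy --- take a ``leftmost'' linear extension arising from the planar embedding and show that any $k$-rainbow forces a chain of length $k+1$ --- is the same as the paper's, and your local dichotomy (each nesting of $u_{i+1}v_{i+1}$ inside $u_iv_i$ forces $u_i <_P u_{i+1}$ or $v_{i+1} <_P v_i$) is exactly the paper's classification of nestings into type~A and type~B. But the step you yourself flag as ``the main obstacle'' is a genuine gap, and it is precisely where all the work of the proof lies. Pairwise dichotomies for consecutive covers do not splice into a long chain when the types mix: already for $k=3$, if the nesting of $u_2v_2$ inside $u_1v_1$ is of type~A ($u_1 <_P u_2$) while the nesting of $u_3v_3$ inside $u_2v_2$ is of type~B ($v_3 <_P v_2$), the comparabilities available are $u_1 < u_2 \prec v_2$ and $u_3 \prec v_3 < v_2$; nothing relates $u_1$ or $u_2$ to $u_3$ or $v_3$, so you only get chains of length $3$, not $k+1=4$. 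The zig-zag case you worry about is thus a real obstruction, and nothing in your outline (including the appeal to rigidity of the $st$-embedding) rules it out.

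The paper closes this gap with a structural planarity lemma, not with the leftmost rule alone. It uses that a planar poset with $0$ and $1$ has dimension at most $2$, hence admits a conjugate order $P^\star$ (a ``left of'' order on incomparable pairs), and defines $L$ as the linear extension that orders incomparable pairs according to $P^\star$. The key lemma~\ref{enum:same-side} states that a cover $a \prec b$ cannot cross a maximal chain $C$: if $a <_{P^\star} x$ for some $x \in C$ and $b \parallel y$ for some $y \in C$, then $b <_{P^\star} y$. From this the paper first derives your dichotomy (for a single nesting, the two incomparabilities cannot both hold), and then --- this is the crucial extra step --- a global consistency statement: letting $I$ be the set of indices $i$ for which the innermost cover $a_1 \prec b_1$ nests below $a_i \prec b_i$ with type~A, any two covers indexed in $I$ nest with type~A and any two indexed outside $I$ nest with type~B. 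Only with this consistency does the set $\{a_i \mid i \in I\} \cup \{a_1,b_1\} \cup \{b_i \mid i \notin I\}$ become a chain of length $k+1$. An analogue of this consistency argument (which is where planarity and the existence of $0$ and $1$ are genuinely used, via $P^\star$) is missing from your plan, so as written it does not go through.
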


\paragraph{Organization of the paper.}
In Section~\ref{sec:general-posets} we consider general (not necessarily planar) posets and give upper bounds on their queue-number in terms of their width, such as Theorem~\ref{thm:general-width-2}.
In Section~\ref{sec:planar-posets-width} we consider planar posets and bound the queue-number in terms of the width, both from above and below, i.e., we prove Theorem~\ref{thm:planar-width}.
In Section~\ref{sec:planar-posets-height} we give a counterexample to Conjecture~\ref{conj:planar-height} by constructing a planar poset with height $2$ and queue-number at least $4$.
Here we also argue that proving \emph{any} upper bound on the queue-number of such posets is equivalent to proving Conjecture~\ref{conj:queue-planar-graphs}. Finally, we show that Conjecture~\ref{conj:planar-height} holds for planar posets with 0 and 1 and that for every $h$ there is a planar poset of height $h$ and queue-number $h-1$ (c.f.\ Proposition~\ref{prop:planar-height-LB}).

\section{General Posets of Bounded Width}
\label{sec:general-posets}

By Dilworth's Theorem~\cite{D-50}, the width of a poset $P$ coincides with the smallest integer $w$ such that $P$ can be decomposed into $w$ chains of $P$.
Let us derive Proposition~\ref{prop:Heath-width} of Heath and Pemmaraju~\cite{HP-97} from such a chain partition.

\begin{proposition}\label{prop:Heath-width}
For every poset $P$, if $\width(P)\leq w$ then $\qn(P)\leq w^2$.
\end{proposition}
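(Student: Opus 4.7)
The plan is to use Dilworth's theorem (as the paragraph preceding the statement invites us to) and assign cover edges to queues indexed by \emph{pairs} of chains. Concretely, fix a chain decomposition $C_1,\ldots,C_w$ of $P$, pick any linear extension $L$ of $P$, and for each cover edge $e = xy$ with $x \prec_P y$, let $i,j \in \{1,\ldots,w\}$ be the indices with $x \in C_i$ and $y \in C_j$, and place $e$ on queue $Q_{i,j}$. This uses $w^2$ queues, so to conclude $\qn(P) \le w^2$ it suffices to show that no two edges in a common queue $Q_{i,j}$ are nested with respect to $L$.

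To verify this, I would consider two cover edges $xy$ and $x'y'$ both assigned to $Q_{i,j}$, so $x,x' \in C_i$ and $y,y' \in C_j$. Since $C_i$ is a chain, $x$ and $x'$ are comparable; by symmetry assume $x \le_P x'$. If $x = x'$ the two edges share an endpoint and are not nested. If $x <_P x'$ I would then argue that $y \le_P y'$: were $y' <_P y$, then chaining $x <_P x' \prec_P y' <_P y$ would place $x'$ strictly between $x$ and $y$ in $P$, contradicting the fact that $xy$ is a cover relation. Combining $x <_L x'$ with $y \le_L y'$, neither of the two nesting patterns $x <_L x' <_L y' <_L y$ or $x' <_L x <_L y <_L y'$ can occur, so the two edges are not nested in $Q_{i,j}$.

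The only subtle point is the step that rules out $y' <_P y$ using the cover relation; everything else is bookkeeping around Dilworth and the fact that linear extensions respect $<_P$. This also makes clear why the bound is $w^2$ rather than $w$: without using anything further about the poset, we spend one queue per ordered pair of chains, because the argument needs both the source chain $C_i$ and the target chain $C_j$ to fix the relative order of the endpoints.
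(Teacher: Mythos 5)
Your proof is correct and takes essentially the same approach as the paper: both fix a Dilworth chain decomposition, label each cover relation by the ordered pair of chains containing its endpoints, and use the cover property (no element strictly between the endpoints of a cover) to show that two covers carrying the same label cannot nest, the paper stating this contrapositively as ``within a rainbow all labels are distinct.'' The only difference is presentational---you construct the $w^2$ queues explicitly, whereas the paper bounds the size of any rainbow under an arbitrary linear extension, which is equivalent via the Heath--Rosenberg characterization quoted in the introduction.
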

\begin{proof}
Let $P$ be a poset of width $w$ and $C_1,\ldots,C_w$ be a chain partition of $P$.
Let $L$ be any linear extension of $P$ and $a <_L b <_L c <_L d$ with $a\prec d$ and $b\prec c$.
Note that we must have either $a \parallel b$ or $c \parallel d$.
If follows that if $a \in C_i$, $b \in C_j$, $c \in C_k$, and $d \in C_\ell$, then $(i,\ell) \neq (j,k)$.
As there are only $w^2$ ordered pairs $(x,y)$ with $x,y \in [w]$, we can conclude that every nesting set of covers has cardinality at most $w^2$.
\end{proof}

Note that in the above proof $L$ is \textit{any} linear extension and that without choosing the linear extension $L$ carefully, upper bound $w^2$ is best-possible. Namely, if $P=\{a_1, \ldots, a_k,b_1, \ldots, b_k\}$ with comparabilities $a_i<b_j$ for all $1\leq i,j\leq k$, then $P$ has width $k$ and the linear extension $a_1< \ldots<a_k<b_k< \ldots< b_1$ creates a rainbow of size $k^2$.

We continue by showing that every poset of width $2$ has queue-number at most $2$, that is, we prove Theorem~\ref{thm:general-width-2}.

\begin{proof}[Theorem~\ref{thm:general-width-2}]
 Let $P$ be a poset of width $2$ and minimum element $0$ and $C_1,C_2$ be a chain partition of $P$. Note that the assumption of the minimum causes no loss of generality, since a $0$ can be added without increasing the width nor decreasing the queue-number. Any linear extension $L$ of $P$ partitions the ground set $X$ naturally into inclusion-maximal sets of elements, called \emph{blocks}, from the same chain in $\{C_1,C_2\}$ that appear consecutively along $L$, see Figure~\ref{fig:lazy-linear-extension}.
 We denote the blocks by $B_1,\ldots,B_k$ according to their appearance along $L$.
 We say that $L$ is \emph{lazy} if for each $i = 2,\ldots,k$, each element $x \in B_i$ has a relation to some element $y \in B_{i-1}$.
 A linear extension $L$ can be obtained by picking any minimal element $m\in P$, put it into $L$, and recurse on $P\setminus\{m\}$. Lazy linear extensions (with respect to $C_1,C_2$) can be constructed by the same process where additionally the next element is chosen from the same chain as the element before, if possible. 
 Note that the existence of a $0$ is needed in order to ensure the property of laziness with respect to $B_2$.
 
 \begin{figure}
  \centering
  \includegraphics{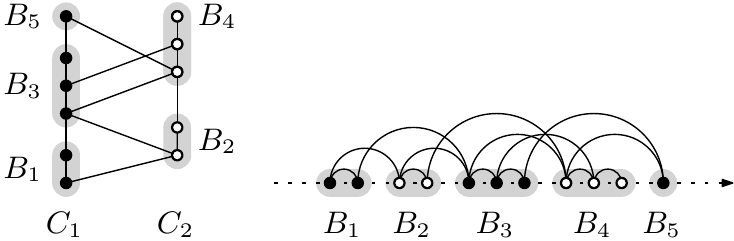}
  \caption{A poset of width~$2$ with a $0$ and a chain partition $C_1,C_2$ and the blocks $B_1,\ldots,B_5$ induced by a lazy linear extension with respect to $C_1,C_2$.}
  \label{fig:lazy-linear-extension}
 \end{figure}

 Now we shall prove that in a lazy linear extension no three covers are pairwise nesting.
 So assume that $a\prec b$ is any cover and that $a \in B_i$ and $b \in B_j$.
 As $L$ is lazy, $b$ is comparable to some element in $B_{j-1}$ (if $j \geq 2$) and all elements in $B_1,\ldots,B_{j-2}$ (if $j \geq 3$).
 With $a\prec b$ being a cover, it follows from $L$ being lazy that $i \in \{j-2,j-1,j\}$.
 If $i=j$, then no cover is nested under $a\prec b$.
 If $i=j-1$, then no cover $c\prec d$ is nested above $a\prec b$: either $c\in B_i$ and $d\in B_j$ and hence $c\prec d$ is not a cover, or both endpoints would be inside the same chain, i.e., $c,d$ are the last and first element of $B_{j-2}$ and $B_{j}$ or $B_{i}$ and $B_{i+2}$, respectively. This implies $c<_L a<_L d<_L b$ or $a<_L c<_L b<_L r$, respectively, and $c\prec d$ cannot nest above $a\prec b$.
 If $i=j-2$, then no cover is nested above $a\prec b$.
 Thus, either no cover is nested below $a\prec b$, or no cover is nested above $a\prec b$, or both.
 In particular, there is no three nesting covers and $\qn(P) \leq 2$.
\end{proof}

\begin{corollary}\label{cor:general-width}
 Every poset of width $w$ has queue-number at most $w^2-2\lfloor w/2 \rfloor$.
\end{corollary}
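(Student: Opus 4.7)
My plan is to refine the counting argument of Proposition~\ref{prop:Heath-width} by pairing up the chains of a Dilworth decomposition of $P$. Fix a chain partition $C_1,\ldots,C_w$ of $P$ and group the chains into pairs $D_k = C_{2k-1}\cup C_{2k}$ for $k=1,\ldots,\lfloor w/2\rfloor$, leaving $C_w$ as an unpaired singleton when $w$ is odd. Each $D_k$ is then a sub-poset of width at most $2$, so Theorem~\ref{thm:general-width-2} guarantees that $D_k$ admits a linear extension (a lazy one, from the proof) with queue-number at most $2$.

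The main step is to construct a single linear extension $L$ of $P$ whose restriction $L|_{D_k}$ to each pair is itself a $2$-queue layout of $D_k$. I would attempt this by adapting the lazy construction from the proof of Theorem~\ref{thm:general-width-2}: greedily pick a minimum of the remaining poset using a multi-tier priority that first tries the same chain as the previously picked element, then the other chain of its pair $D_k$, and only then moves to a different pair. If the resulting $L$ restricted to each $D_k$ inherits the block structure required by Theorem~\ref{thm:general-width-2}, then no three covers of $D_k$, and in particular no three covers of $P$ with both endpoints in $D_k$, are pairwise nested under $L$.

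Granted such an $L$, the bound follows from the distinct-label analysis of Proposition~\ref{prop:Heath-width}. Any rainbow of $P$ has covers with pairwise distinct labels in $[w]\times[w]$, and I would group these $w^2$ labels into three classes: the $4\lfloor w/2\rfloor$ within-pair labels, which by the previous paragraph contribute at most $2$ per pair, for a total of $2\lfloor w/2\rfloor$; the self-label $(w,w)$ of the singleton chain (if $w$ is odd), contributing at most $1$; and the remaining cross-pair labels, contributing at most $1$ each. Adding these quantities gives exactly $w^2 - 2\lfloor w/2\rfloor$, matching the claimed bound in both parities of $w$.

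The main obstacle is verifying that the global priority rule really preserves the lazy structure of each $L|_{D_k}$ individually. The subtlety is that picks from other pairs made between two successive picks from $D_k$ can release a minimum in the opposite chain of $D_k$ that is not related to the previous block of $L|_{D_k}$, which would invalidate a direct application of Theorem~\ref{thm:general-width-2} to $L|_{D_k}$. I expect one needs either a more refined tie-breaking rule or a compatibility argument showing that the per-pair lazy extensions guaranteed by Theorem~\ref{thm:general-width-2} can always be merged into a common linear extension of $P$ without introducing cycles between $<_P$ and the internal orders of the $D_k$'s.
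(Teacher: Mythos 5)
You take essentially the same route as the paper's own proof: pair up the chains of a Dilworth decomposition into $\lfloor w/2\rfloor$ width-$2$ subposets, apply Theorem~\ref{thm:general-width-2} to each pair, and refine the label-counting of Proposition~\ref{prop:Heath-width} so that the four within-pair labels of each pair contribute at most two nesting covers in total. Your arithmetic (within-pair labels, the singleton label $(w,w)$ for odd $w$, and cross-pair labels) reproduces the paper's bound $w^2-2\lfloor w/2\rfloor$ exactly, in both parities.

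The one step you flag as the main obstacle --- producing a \emph{single} linear extension of $P$ whose restriction to each pair $D_k$ is still a $2$-queue layout of $P|_{D_k}$ --- is precisely the step the paper dispatches in one sentence: ``Let $L$ be a linear extension of $P$ respecting all these partial linear extensions.'' The paper gives no construction and no argument that such an $L$ exists; in particular, it does not resolve the concern you raise. And that concern is mathematically legitimate: linear extensions of disjoint induced subposets cannot in general be merged with the ambient order, because the union of $<_P$ with the chosen sub-orders may contain a directed cycle alternating between $P$-relations and reversed incomparabilities from different pairs, and the lazy extensions supplied by Theorem~\ref{thm:general-width-2} are not obviously immune to this. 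So you have not identified a defect that the paper repairs; you have identified the exact point at which the paper's proof is also only an assertion. In that sense your proposal is as complete as the published argument, and making either one fully rigorous requires what your last paragraph calls for: a global construction (or a compatibility lemma) guaranteeing that, within each pair, a switch between its two chains only happens towards an element lying above something in that pair's previous block, so that each restriction $L|_{D_k}$ inherits the lazy block structure on which Theorem~\ref{thm:general-width-2} relies.
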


\begin{proof}
We take any chain partition of size $w$ and pair up chains to obtain a set $S$ of $\lfloor w/2 \rfloor$ disjoint pairs.
Each pair from $S$ induces a poset of width at most $2$, which by Theorem~\ref{thm:general-width-2} admits a linear order with at most two nesting covers. Let $L$ be a linear extension of $P$ respecting all these partial linear extensions. 

Now, following the proof of Proposition~\ref{prop:Heath-width} any cover can be labeled by a pair $(i,j)$ corresponding to the chains containing its endpoint. 
Thus, in a set of nesting covers any pair appears at most once, but for each $i,j$ such that $(i,j)\in S$ only two of the four possible pairs can appear simultaneously in a nesting.
This yields the upper bound.
\end{proof}

For an integer $k \geq 2$ we define a \emph{subdivided $k$-crown} as the poset $P_k$ as follows.
The elements of $P_k$ are $\{a_1,\ldots,a_k,b_1,\ldots,b_k,c_1,\ldots,c_k\}$ and the cover relations are given by $a_i \prec b_i$ and $b_i \prec c_i$ for $i = 2,\ldots,k$, $a_i \prec c_{i-1}$ for $i = 1,\ldots,k-1$, and $a_1 \prec c_k$; see the left of Figure~\ref{fig:P_k-example}.
We refer to the covers of the form $a_i \prec c_j$ as the \emph{diagonal covers} and we say that a poset $P$ has an \emph{embedded $P_k$} if $P$ contains $3k$ elements that induce a copy of $P_k$ in $P$ with all diagonal covers of that copy being covers of $P$.

 \begin{figure}
  \centering
  \includegraphics{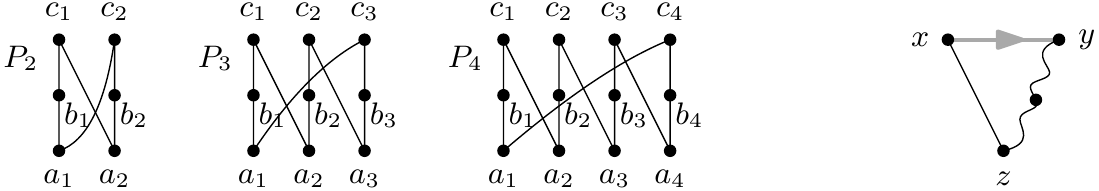}
  \caption{Left: The posets $P_2$, $P_3$, and $P_4$. Right: The existence of an element $z$ with cover relation $z \prec x$ and non-cover relation $z < y$ gives rise to a gray edge from $x$ to $y$.}
  \label{fig:P_k-example}
 \end{figure}

\begin{theorem}\label{thm:subdivided-crowns}
 If $P$ is a poset that for no $k \geq 2$ has an embedded $P_k$, then the queue-number of $P$ is at most the width of $P$.
\end{theorem}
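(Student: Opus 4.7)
The plan is to fix a chain partition $C_1,\ldots,C_w$ of $P$ (which exists by Dilworth's theorem) and produce a specific linear extension $L$ for which the rainbow number is at most $w$; the main idea is to construct $L$ so that if a $(w+1)$-rainbow ever occurs in $L$, then one can extract an embedded $P_k$ from $P$ for some $k\ge 2$. A natural candidate for $L$ is a greedy/lazy extension with respect to the chain partition (in the spirit of the one used in the proof of Theorem~\ref{thm:general-width-2}): process elements one at a time, picking the next element along the current chain whenever possible and otherwise using a fixed priority rule on chains.

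The core combinatorial engine is a pigeonhole on \emph{types}. Following the proof of Proposition~\ref{prop:Heath-width}, label each rainbow cover $a\prec d$ with the type $(i,j)$ where $a\in C_i$ and $d\in C_j$, and recall that no two rainbow covers share a type. View the $w+1$ types of the hypothetical rainbow as directed edges in a digraph $D$ on vertex set $\{1,\ldots,w\}$. Since $|E(D)|=w+1>|V(D)|$, $D$ contains a directed cycle (possibly a loop) $s_1\to s_2\to\cdots\to s_k\to s_1$, and this cycle selects $k$ rainbow covers $\pi_t=a^{(t)}\prec d^{(t)}$ with $a^{(t)}\in C_{s_t}$ and $d^{(t)}\in C_{s_{t+1}}$ (indices modulo $k$). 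These $k$ covers will play the role of the diagonal covers of an embedded $P_k$.

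To supply the subdividing $b$-elements of $P_k$, observe that for each $t$ the elements $a^{(t+1)}$ and $d^{(t)}$ both lie in the chain $C_{s_{t+1}}$, and the rainbow order forces $a^{(t+1)}<_L d^{(t)}$, which in a single chain yields $a^{(t+1)}<_P d^{(t)}$. Any element of $P$ strictly between $a^{(t+1)}$ and $d^{(t)}$ then serves as the subdivider along that arm. The ``gray edge'' configuration of Figure~\ref{fig:P_k-example}, taken with $z=a^{(t+1)}$, $x=d^{(t+1)}$, and $y=d^{(t)}$, encodes exactly this pattern: a cover $z\prec x$ together with a non-cover relation $z<y$. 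With the $2k$ extremal elements $a^{(t)}, d^{(t)}$ playing the roles of the $a_i$'s and $c_i$'s of $P_k$ (after a suitable cyclic reindexing that matches the covers of the cycle to the diagonals and the chain relations to the arms), and the $k$ subdividers playing the $b_i$'s, one obtains $3k$ elements that, after checking the induced relations, form the desired embedded $P_k$.

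The main obstacle will be twofold. First, one must ensure that the $3k$ selected elements actually \emph{induce} a copy of $P_k$ in $P$, not merely contain its cover relations; spurious comparabilities among the selected elements must be ruled out, which likely requires choosing the cycle in $D$ to be shortest possible and exploiting the careful construction of $L$. Second, one must handle the degenerate case in which some arm $a^{(t+1)}<_P d^{(t)}$ is actually a direct cover in $P$, so that no subdivider exists there; I expect this to be resolved either by short-circuiting the cycle to yield a smaller embedded $P_{k'}$ (with $k'\ge 2$) or by using the laziness of $L$ to rule out such direct covers. Making these two points fit together cleanly, and in particular guaranteeing that the final $k$ is at least $2$, is where the real technical work lies.
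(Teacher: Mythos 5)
Your argument breaks down at the pigeonhole step, and the break is fatal. For a \emph{directed} graph, having more arcs than vertices does not force a directed cycle: a DAG on $w$ vertices can carry up to $\binom{w}{2}$ distinct arcs, so for $w \geq 4$ your $w+1$ rainbow types can perfectly well form an acyclic pattern (e.g.\ any $w+1$ arcs of the transitive tournament on $[w]$). Counting alone would only let you extract a directed cycle once a rainbow has more than $\binom{w}{2}+w$ covers, which lands you back in the territory of Proposition~\ref{prop:Heath-width} rather than at the bound $w$. Moreover, even when a ``cycle'' exists it may be a loop: a rainbow cover with both endpoints in the same chain has type $(i,i)$, and a loop gives $k=1$, for which no forbidden $P_k$ exists. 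Since every later step of your proof feeds off the extracted directed cycle of length $k\geq 2$, the argument does not get off the ground for general $w$.

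The downstream obstacles you flag are also real and not merely technical. A directed cycle of types selects covers $a^{(t)}\prec d^{(t)}$, but nothing in the rainbow structure prevents spurious comparabilities such as $a^{(t)} <_P a^{(t')}$ (left endpoints of nested covers in \emph{different} chains may be comparable), which destroys the antichain condition needed for an induced copy of $P_k$; choosing a shortest cycle in the type digraph has no mechanism to exclude this. The paper's proof is structured quite differently precisely to get this leverage: it never fixes a chain partition or counts types. Instead it augments the cover digraph by ``gray edges'' $x \to y$ whenever some $z$ satisfies the cover relation $z \prec x$ and the non-cover relation $z < y$, proves (via a minimal-cycle argument) that any directed cycle in this augmented digraph yields an embedded subdivided crown, and then takes $L$ to be a topological ordering of the acyclic augmented digraph. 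With that $L$, for any nesting pair $x_2 <_L x_1 <_L y_1 <_L y_2$ the comparability $x_2 <_P x_1$ would create a gray edge $y_2 \to y_1$ contradicting the topological order, so the left endpoints of every rainbow form an antichain and the width bound follows with no counting at all. That incomparability-forcing role of the gray edges is exactly what your type digraph lacks; to salvage your approach you would need a substitute structural reason, not a sharper pigeonhole.
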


\begin{proof}
 Let $P$ be any poset.
 For this proof we consider the cover graph $G_P$ of $P$ as a directed graph with each edge $xy$ directed from $x$ to $y$ if $x  \prec y$ in $P$.
 We call these edges the \emph{cover edges}.
 Now we augment $G_P$ to a directed graph $G$ by introducing for some incomparable pairs $x \parallel y$ a directed edge.
 Specifically, we add a directed edge from $x$ to $y$ if there exists a $z$ with $z < x,y$ in $P$ where $z \prec x$ is a cover relation and $z < y$ is not a cover relation; see the right of Figure~\ref{fig:P_k-example}.
 We call these edges the \emph{gray edges} of $G$.
 
 Now we claim that if $G$ has a directed cycle, then $P$ has an embedded subdivided crown.
 Clearly, every directed cycle in $G$ has at least one gray edge.
 We consider the directed cycles with the fewest gray edges and among those let $C = [c_1,\ldots,c_{\ell}]$ be one with the fewest cover edges.s
 First assume that $C$ has a cover edge (hence ${\ell} \geq 3$), say $c_1c_2$ is a gray edge followed by a cover edge $c_2c_3$.
 Consider the element $z$ with cover relation $z  \prec c_1$ and non-cover relation $z < c_2$ in $P$.
 By $z < c_2  \prec c_3$ we have a non-cover relation $z < c_3$ in $P$.
 Now if $c_1 \parallel c_3$ in $P$, then $G$ contains the gray edge $c_1c_3$ (see Figure~\ref{fig:P_k-proof-cases}(a)) and $[c_1,c_3,\ldots,c_{\ell}]$ is a directed cycle with the same number of gray edges as $C$ but fewer cover edges, a contradiction.
 On the other hand, if $c_1 < c_3$ in $P$ (note that $c_3 < c_1$ is impossible as $z  \prec c_1$ is a cover), then there is a directed path $Q$ of cover edges from $c_1$ to $c_3$ (see Figure~\ref{fig:P_k-proof-cases}(b)) and $C + Q - \{c_1c_2, c_2c_3\}$ contains a directed cycle with fewer gray edges than $C$, again a contradiction.
 
 \begin{figure}
  \centering
  \includegraphics{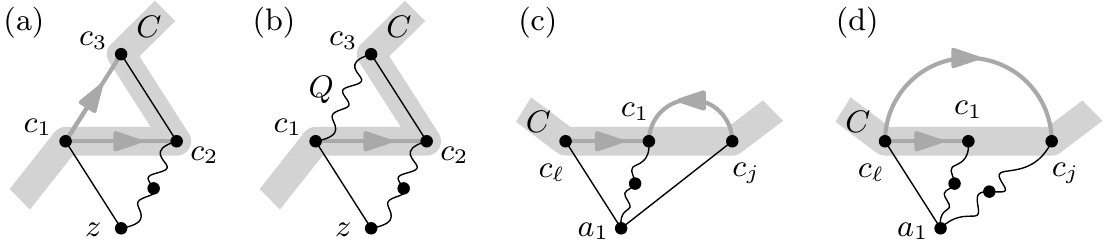}
  \caption{Illustrations for the proof of Theorem~\ref{thm:subdivided-crowns}.}
  \label{fig:P_k-proof-cases}
 \end{figure}
 
 Hence $C = [c_1,\ldots,c_{\ell}]$ is a directed cycle consisting solely of gray edges.
 Note that by the first paragraph $\{c_1,\ldots,c_{\ell}\}$ is an antichain in $P$.
 For $i=2,\ldots,{\ell}$ let $a_i$ be the element of $P$ with cover relation $a_i \prec c_{i-1}$ and non-cover relation $a_i < c_i$, as well as $a_1$ with cover relation $a_1 \prec c_{\ell}$ and non-cover relation $a_1 < c_1$.
 As $\{c_1,\ldots,c_{\ell}\}$ is an antichain and $a_i < c_i$ holds for $i=1, \ldots, {\ell}$, we have $\{c_1,\ldots,c_{\ell}\} \cap \{a_1,\ldots,a_{\ell}\} = \emptyset$.
 Let us assume that $a_1 < c_j$ in $P$ for some $j \neq 1,{\ell}$.
 If $a_1 \prec c_j$ is a cover relation, then there is a gray edge $c_jc_1$ in $G$ (see Figure~\ref{fig:P_k-proof-cases}(c)) and the cycle $[c_1,\ldots,c_j]$ is shorter than $C$, a contradiction.
 If $a_1 < c_j$ is a non-cover relation, then there is a gray edge $c_{\ell}c_j$ in $G$ (see Figure~\ref{fig:P_k-proof-cases}(d)) and the cycle $[c_j,\ldots,c_{\ell}]$ is shorter than $C$, again a contradiction.
 
 Hence, the only relations between $a_1,\ldots,a_{\ell}$ and $c_1,\ldots,c_{\ell}$ are cover relations $a_1 \prec c_{\ell}$ and $a_i \prec c_{i-1}$ for $i=2,\ldots,{\ell}$ and the non-cover relations $a_i < c_i$ for $i = 1,\ldots,\ell$.
 Hence $a_1,\ldots,a_{\ell}$ are pairwise distinct. Moreover, $\{a_1,\ldots,a_{\ell}\}$ is an antichain in $P$ since the only possible relations among these elements are of the form $a_1 < a_{\ell}$ or $a_i < a_{i-1}$, which would contradict that $a_1 \prec c_{\ell}$ and $a_i \prec c_{i-1}$ are cover relations.
 Finally, we pick for every $i=1,\ldots,{\ell}$ an element $b_i$ with $a_i < b_i < c_i$, which exists as $a_i < c_i$ is a non-cover relation.
 Together with the above relations between $a_1,\ldots,a_{\ell}$ and $c_1,\ldots,c_{\ell}$ we conclude that $b_1,\ldots,b_{\ell}$ are pairwise distinct and these $3{\ell}$ elements induce a copy of $P_{\ell}$ in $P$ with all diagonal covers in that copy being covers of $P$.
 
 Thus, if $P$ has no embedded $P_k$, then the graph $G$ we constructed has no directed cycles, and we can pick $L$ to be any topological ordering of $G$.
 As $G_P \subseteq G$, $L$ is a linear extension of $P$.
 For any two nesting covers $x_2 <_L x_1  <_L y_1 <_L y_2$ we have $x_1 \parallel x_2$ or $y_1 \parallel y_2$ or both, since $x_2 \prec y_2$ is a cover.
 However, if $x_2 < x_1$ in $P$, then there would be a gray edge from $y_2$ to $y_1$ in $G$, contradicting $y_1 <_L y_2$ and $L$ being a topological ordering of $G$.
 We conclude that $x_1 \parallel x_2$ and the left endpoints of any rainbow form an antichain, proving $\qn(P)\leq \width(P)$.
\end{proof}

Let us remark that several classes of posets have no embedded subdivided crowns, e.g., graded posets, interval orders (since these are 2+2-free, see~\cite{F-70}), or (quasi-)series-parallel orders (since these are N-free, see~\cite{HJ-85}). Here, 2+2 and N are the four-element posets defined by $a<b, c<d$ and $a<b, c<d, c<b$, respectively.
Also note that while subdivided crowns are planar posets, no planar poset with 0 and 1 has an embedded $k$-crown. Indeed, already looking at the subposet induced by the $k$-crown and the 0 and the 1, it is easy to see that there must be a crossing in any diagram.
Thus, we obtain:

\begin{corollary}\label{cor:interval}
 For any interval order, series-parallel order, and planar poset with 0 and 1, $P$ we have $\qn(P) \leq \width(P)$.
\end{corollary}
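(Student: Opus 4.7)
The plan is to invoke Theorem~\ref{thm:subdivided-crowns}: it suffices to show that each of the three classes contains no embedded subdivided crown $P_k$ for any $k \geq 2$.

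For interval orders and series-parallel orders, I would combine this with the classical characterizations of these classes as $2{+}2$-free and N-free, respectively, and exhibit both forbidden configurations inside $P_k$. From the definition of $P_k$, the upper set of each $a_i$ is $\{b_i, c_i, c_{i-1}\}$ and the upper set of each $b_i$ is $\{c_i\}$; this immediately implies that the two covers $a_1 \prec b_1$ and $a_2 \prec b_2$ induce a $2{+}2$, and that the four elements $\{b_1, c_1, a_2, c_2\}$ induce an N via the relations $b_1 < c_1$, $a_2 < c_1$, and $a_2 < c_2$ (through $b_2$), with all other pairs incomparable.

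For planar posets with $0$ and $1$, I plan to argue by contradiction: assume $P$ has an embedded $P_k$ for some $k \geq 2$ and consider the subposet $Q = P_k \cup \{0, 1\}$ of $P$. Suppressing the degree-two vertices $b_i$ (each with neighbors $a_i$ and $c_i$), the Hasse diagram of $Q$ becomes a bipartite graph on $2k+2$ vertices with bipartition $\{0, c_1, \dots, c_k\}$ versus $\{1, a_1, \dots, a_k\}$, having exactly $4k$ edges: the $k$ edges $0$--$a_i$, the $k$ edges $c_j$--$1$, the $k$ suppressed edges $a_i$--$c_i$, and the $k$ diagonal cover edges $a_i$--$c_{i-1}$. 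Since $P$ is a planar poset with $0$ and $1$, both elements lie on the outer face of any planar diagram of $P$, so the Hasse diagram of $P$ remains planar when augmented with an edge from $0$ to $1$. The same augmentation applied to $Q$ would produce a bipartite graph on $2k+2$ vertices with $4k+1$ edges, violating the Euler bound $|E| \leq 2|V| - 4 = 4k$; hence the augmented Hasse diagram of $Q$ cannot be planar.

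The main point requiring attention is transferring this non-planarity back to $P$. For that I would verify that the augmented Hasse diagram of $Q$ is a minor of the augmented Hasse diagram of $P$: every cover in $Q$ is witnessed by a monotone path in $P$'s Hasse diagram, and one can distribute the elements of $P \setminus Q$ into the branch sets of the $Q$-elements so that contraction reproduces the Hasse diagram of $Q$ (while the $0$--$1$ edge survives on both sides). Since planarity is minor-closed, the Hasse diagram of $P$ with the added $0$--$1$ edge would then also be non-planar, contradicting the assumption that $P$ is a planar poset with $0$ and $1$.
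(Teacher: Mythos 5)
Your reduction to Theorem~\ref{thm:subdivided-crowns}, your treatment of interval orders and series-parallel orders (exhibiting a $2{+}2$ and an N inside $P_k$), and your Euler-formula computation showing that the Hasse diagram of $Q = P_k \cup \{0,1\}$ augmented by a $0$--$1$ edge violates the bipartite planarity bound are all correct, and the last of these is in fact more rigorous than the paper's own remark, which only asserts that a crossing is ``easy to see.''

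The gap is the transfer step, which you flag as ``the main point requiring attention'' but never carry out, and it is precisely the step that cannot be taken for granted. The principle you invoke --- that the Hasse diagram of an induced subposet is a minor of the Hasse diagram of the ambient poset, because covers of $Q$ are witnessed by monotone paths in $G_P$ and the intermediate elements ``can be distributed into branch sets'' --- is \emph{false} in general. Witnessing paths for distinct covers of $Q$ may share internal vertices, and then no valid system of disjoint connected branch sets need exist. Kelly's classical planar posets of large dimension contain the standard examples $S_n$ as induced subposets, yet the cover graph of $S_n$ (namely $K_{n,n}$ minus a perfect matching) is non-planar for $n \geq 5$ while Kelly's cover graphs are planar; so the subposet's Hasse diagram is in general \emph{not} a minor of the poset's Hasse diagram, and your assertion is exactly the point where the proof must do real work.

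The step can be repaired, but only by using the specific structure of the embedded crown: every relation you route through a path is a cover relation of $Q$, and $Q$ is an induced subposet, so any common internal vertex $v$ of two witnessing paths would force a relation such as $a_i < b_j$ ($i \neq j$), $b_i < b_j$, $b_i < c_j$ ($i\neq j$), or $c_j < a_i$, contradicting the incomparabilities of the induced copy of $P_k$; the only possible overlaps are among the paths leaving $0$ (respectively entering $1$), and those can all be absorbed into the branch set of $0$ (respectively of $1$). Alternatively, one can avoid minors entirely via dimension, which is presumably the intended reading of the paper's remark: a planar poset with $0$ and $1$ has dimension at most $2$~\cite{BFR-72}, dimension is monotone under taking subposets, and a poset with $0$ and $1$ of dimension at most $2$ is planar; hence $Q$ would be a planar poset, and since its $0$ and $1$ lie on the outer face of any diagram, its Hasse diagram plus the $0$--$1$ edge would be a planar graph, contradicting your Euler count. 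With either repair your argument is complete; as written, it is not.
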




\section{Planar Posets of Bounded Width}
\label{sec:planar-posets-width}

Heath and Pemmaraju~\cite{HP-97} show that the largest queue-number among planar posets of width $w$ lies between $\lceil \sqrt{w} \rceil$ and $4w-1$.
Here we improve the lower bound to $w$ and the upper bound to $3w-2$.

\begin{proposition}\label{prop:planar-width-LB}
 For each $w$ there exists a planar poset $Q_w$ with 0 and 1 of width $w$ and queue-number $w$.
\end{proposition}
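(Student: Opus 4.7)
The upper bound $\qn(Q_w)\le w$ is immediate from Corollary~\ref{cor:interval} (planar posets with $0$ and $1$ have no embedded subdivided crown), so only the lower bound requires a construction. I would build $Q_w$ explicitly. A natural base case is $w=2$: take the six-element planar poset on $\{0,a_1,a_2,b_1,b_2,1\}$ with covers $0\prec a_i$, $a_i\prec b_j$ for all $i,j\in\{1,2\}$, and $b_j\prec 1$, i.e.\ $K_{2,2}$ with an added minimum and maximum. A direct case check over its four linear extensions shows that in each one a pair of covers of the form $a_i\prec b_j$ and $a_{i'}\prec b_{j'}$ with $\{i,i'\}=\{j,j'\}=\{1,2\}$ is nested, so $\qn(Q_2)\ge 2$.

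For $w>2$ I would define $Q_w$ inductively from $Q_{w-1}$ by splicing one new element together with a small set of new cover relations into a designated face of the planar embedding of $Q_{w-1}$. The new covers are chosen so that (i) the Hasse diagram remains planar, (ii) the maximum antichain grows by exactly one and no other antichain reaches size $w+1$, and (iii) in every linear extension of $Q_w$ the new element contributes a cover that nests with the $(w-1)$-rainbow supplied by the inductive hypothesis. The lower bound then follows by induction on $w$: given any linear extension $L$ of $Q_w$, restrict it to $Q_{w-1}$, extract a $(w-1)$-rainbow by the inductive hypothesis, and extend it using one of the newly available covers. A symmetric design of the insertion ensures that, regardless of where $L$ places the new element relative to the existing ones, at least one of the new covers is forced to nest strictly outside (or strictly inside) the previous rainbow, giving a rainbow of size $w$.

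The main obstacle is reconciling the three competing constraints at each inductive step. Planarity limits the new covers to those routable within a single face of the existing embedding; the width constraint forbids creating any antichain of size $w+1$, which rules out many of the covers that would most easily enlarge the rainbow; and the rainbow-extension argument demands that the endpoints of the new cover straddle those of the extremal cover of the previous rainbow in every linear extension. Designing an insertion scheme that satisfies all three conditions simultaneously, and in particular handling both the ``nests outside'' and ``nests inside'' sides of the dichotomy so that the inductive argument goes through for every $L$, is the technical heart of the proof.
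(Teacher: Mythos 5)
There is a genuine gap, in fact two. First, your base case $Q_2$ is not a planar poset. A poset with $0$ and $1$ admits a crossing-free (upward planar) diagram only if it is a lattice, and in your poset the join $a_1\vee a_2$ does not exist, since $b_1$ and $b_2$ are incomparable minimal upper bounds. Concretely, by the Di Battista--Tamassia criterion, a single-source single-sink DAG is upward planar if and only if adding the edge from source to sink leaves the graph planar; adding the edge $0\,1$ to your cover graph yields exactly $K_{3,3}$ (with classes $\{0,b_1,b_2\}$ and $\{1,a_1,a_2\}$), which is not planar. The paper itself remarks on precisely this obstruction: no planar poset with $0$ and $1$ contains an embedded crown, and your $Q_2$ is the $2$-crown $K_{2,2}$ with a $0$ and $1$ attached. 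So the construction fails before the induction even starts; note that this tension is unavoidable for your approach, since the upper bound you invoke (Corollary~\ref{cor:interval}) applies exactly because such posets exclude crown-like configurations, which are also the most natural rainbow-producing gadgets.

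Second, the inductive step is not a proof but a list of desiderata: you state conditions (i)--(iii) that a single-element insertion should satisfy and acknowledge that designing it is ``the technical heart,'' which is precisely what is missing. It is far from clear that inserting one element per step can work at all, because the adversary chooses the linear extension after seeing the whole poset, and a single new cover cannot be forced to straddle the inherited rainbow in every extension. The paper's construction resolves this with a different, more generous recursion: $Q_w$ consists of \emph{two} disjoint copies $P,P'$ of $Q_{w-1}$, stacked so that the $1$ of $P$ is covered by the $0$ of $P'$, together with a three-element chain $a\prec b\prec c$ where $a$ is below everything and $c$ above everything. Then in any linear extension all of $P$ precedes all of $P'$, so the middle element $b$ either comes after all of $P$ (and $P$'s inductive $(w-1)$-rainbow nests under the cover $a\prec b$) or before all of $P'$ (and $P'$'s rainbow nests under $b\prec c$). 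This doubling is exactly what neutralizes the adversarial placement that your single-element scheme leaves unresolved, at the cost of exponential size; it also keeps the poset a planar lattice, so the upper bound via Corollary~\ref{cor:interval} applies.
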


\begin{proof}
 We shall define $Q_w$ recursively, starting with $Q_1$ being any chain.
 For $w \geq 2$, $Q_w$ consists of a \emph{lower copy $P$} and a disjoint \emph{upper copy $P'$} of $Q_{w-1}$, three additional elements $a,b,c$, and the following cover relations in between:
 \begin{itemize}
  \item $a \prec x$, where $x$ is the 0 of $P$
  \item $y \prec x'$, where $y$ is the 1 of $P$ and $x'$ is the 0 of $P'$
  \item $y' \prec c$, where $y'$ is the 1 of $P'$
  \item $a \prec b \prec c$
 \end{itemize}
 It is easily seen that all cover relations of $P$ and $P'$ remain cover relations in $Q_w$, and that $Q_w$ is planar, has width $w$, $a$ is the 0 of $Q_w$, and $c$ is the 1 of $Q_w$.
 See Figure~\ref{fig:planar-width-LB} for an illustration.
 
 \begin{figure}
  \centering
  \includegraphics{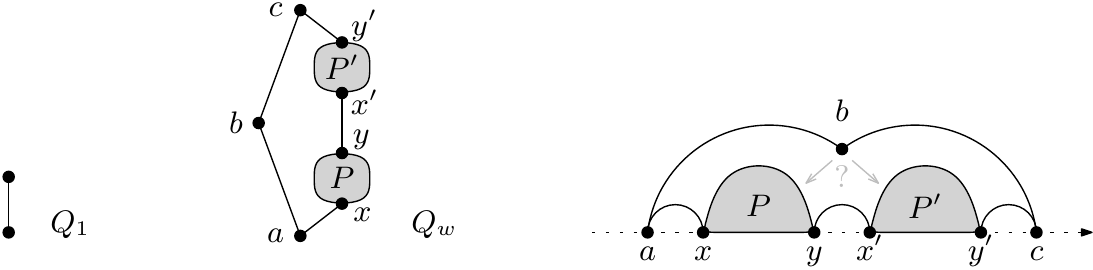}
  \caption{Recursively constructing planar posets $Q_w$ of width $w$ and queue-number $w$.
  Left: $Q_1$ is a two-element chain.
  Middle: $Q_w$ is defined from two copies $P,P'$ of $Q_{w-1}$.
  Right: The general situation for a linear extension of $Q_w$.}
  \label{fig:planar-width-LB}
 \end{figure}
 
 To prove that $\qn(Q_w) = w$ we argue by induction on $w$, with the case $w=1$ being immediate.
 Let $L$ be any linear extension of $Q_w$.
 Then $a$ is the first element in $L$ and $c$ is the last.
 Since $y \prec x'$, all elements in $P$ come before all elements of $P'$.
 Now if in $L$ the element $b$ comes after all elements of $P$, then $P$ is nested under cover $a\prec b$, and if $b$ comes before all elements of $P'$, then $P'$ is nested under cover $b\prec c$.
 We obtain $w$ nesting covers by induction on $P$ in the former case, and by induction on $P'$ in the latter case. This concludes the proof.
\end{proof}


%

Next we prove Theorem~\ref{thm:planar-width}, namely that the maximum queue-number of planar posets of width $w$ lies between $w$ and $3w-2$.

\begin{proof}[Theorem~\ref{thm:planar-width}]
 By Proposition~\ref{prop:planar-width-LB} some planar posets of width $w$ have queue-number $w$.
 So it remains to consider an arbitrary planar poset $P$ of width $w$ and show that $P$ has queue-number at most $3w-2$.
 To this end, we shall add some relations to $P$, obtaining another planar poset $Q$ of width $w$ that has a $0$ and $1$, with the property that $\qn(P) \leq \qn(Q) + 2w-2$.
 Note that this will conclude the proof, as by Corollary~\ref{cor:interval} we have $\qn(Q) \leq w$.
 
 Given a planar poset $P$ of width $w$, there are at most $w$ minima and at most $w$ maxima.
 Hence there are at most $2w-2$ extrema that are not on the outer face.
 For each such extremum $x$ --say $x$ is a minimum-- consider the unique face $f$ with an obtuse angle at $x$.
 We introduce a new relation $y < x$, where $y$ is a smallest element at face $f$, see Figure~\ref{fig:planar-width-UB}.
 Note that this way we introduce at most $2w-2$ new relations, and that these can be drawn y-monotone and crossing-free by carefully choosing the other element in each new relation. Furthermore, every inner face has a unique source and unique sink.
 
 \begin{figure}
  \centering
  \includegraphics{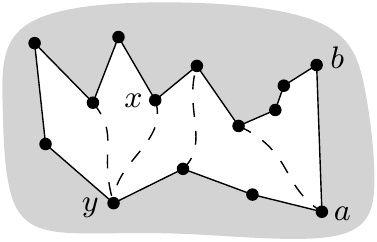}
  \caption{Inserting new relations (dashed) into a face of a plane diagram. Note that relation $a < b$ is a cover relation in $P$ but not in $Q$.}
  \label{fig:planar-width-UB}
 \end{figure}
 
 Now consider a cover relation $a \prec_P b$ that is not a cover relation in the new poset $Q$.
 For the corresponding edge $e$ from $a$ to $b$ in $Q$ there is one face $f$ with unique source $a$ and unique sink $b$.
 Now either way the other edge in $f$ incident to $a$ or to $b$ must be one of the $2w-2$ newly inserted edges, see again Figure~\ref{fig:planar-width-UB}.
 This way we assign $a \prec b$ to one of $2w-2$ queues, one for each newly inserted edge.
 Every such queue contains either at most one edge or two incident edges, i.e., a nesting is impossible, no matter what linear ordering is chosen later.
 
 We create at most $2w-2$ queues to deal with the cover relations of $P$ that are not cover relations of $Q$ and spend another $w$ queues for $Q$ dealing with the remaining cover relations of $P$.
 Thus, $\qn(P) \leq \qn(Q) + 2w - 2 \leq 3w-2$.
\end{proof}

\section{Planar Posets of Bounded Height}
\label{sec:planar-posets-height}

Recall Conjecture~\ref{conj:planar-height}, which states that every planar poset of height $h$ has queue-number at most $h$.
In the following, we give a counterexample to this conjecture:

\begin{proof}[Theorem~\ref{thm:planar-height-2}]
 Consider the graph $G$ that is constructed as follows:
 Start with $K_{2,10}$ with bipartition classes $\{a_1,a_2\}$ and $\{b_1,\ldots,b_{10}\}$.
 For every $i=1,\ldots,9$ add four new vertices $c_{i,1},\ldots,c_{i,4}$, each connected to $b_i$ and $b_{i+1}$.
 The resulting graph $G$ has $46$ vertices, is planar and bipartite with bipartition classes $X = \{b_1,\ldots,b_{10}\}$ and $Y = \{a_1,a_2\} \cup \{c_{i,j} \mid 1\leq i \leq 9, 1\leq j \leq 4\}$. See Figure~\ref{fig:planar-height-2-LB}.
 
 \begin{figure}
  \centering
  \includegraphics{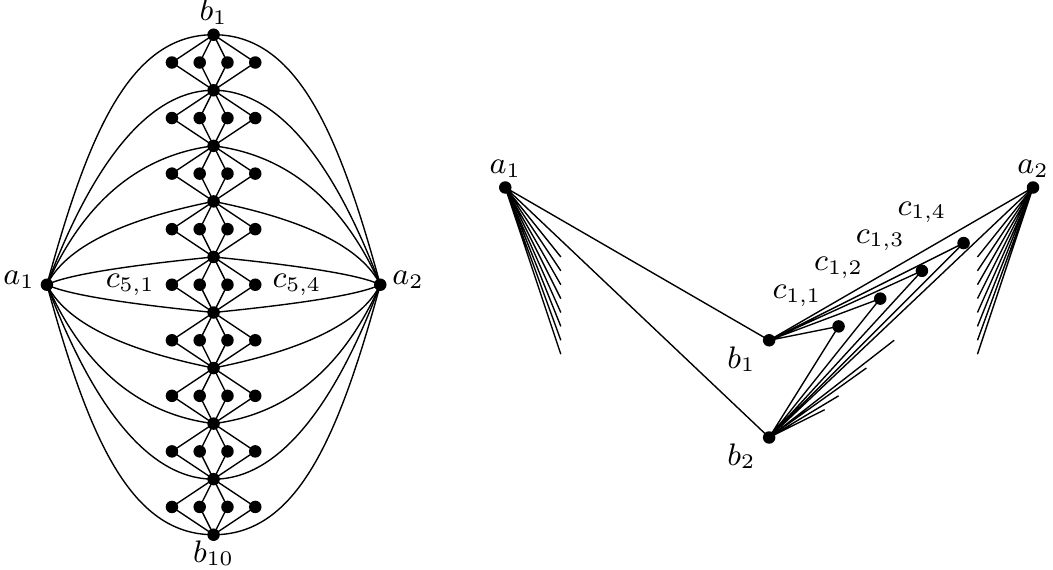}
  \caption{A planar poset $P$ of height $2$ and queue-number at least $4$. Left: The cover graph $G_P$ of $P$. Right: A part of a planar diagram of $P$.}
  \label{fig:planar-height-2-LB}
 \end{figure}
 
 Let $P$ be the poset arising from $G$ by introducing the relation $x < y$ for every edge $xy$ in $G$ with $x \in X$ and $y \in Y$.
 Clearly, $P$ has height $2$ and hence the cover relations of $P$ are exactly the edges of $G$.
 Moreover, by a result of Moore~\cite{Moo-75} (see also~\cite{BLR-90}) $P$ is planar because $G$ is planar, also see the right of Figure~\ref{fig:planar-height-2-LB}.
 
 We shall argue that $\qn(P) \geq 4$.
 To this end, let $L$ be any linear extension of $P$.
 Without loss of generality we have $a_1 <_L a_2$.
 Note that since in $P$ one bipartition class of $G$ is entirely below the other, any $4$-cycle in $G$ gives a $2$-rainbow.
 Let $b_{i_1},b_{i_2}$ be the first two elements of $X$ in $L$, $b_{j_1},b_{j_2}$ be the last two such elements.
 As $|X| = 10$ there exists $1\leq i \leq 9$ such that $\{i,i+1\} \cap \{i_1,i_2,j_1,j_2\} = \emptyset$, i.e., we have $b_{i_1},b_{i_2} <_L b_i,b_{i+1} <_L b_{j_1},b_{j_2} <_L a_1 <_L a_2$, where we use that $a_1$ and $a_2$ are above all elements of $X$ in $P$.
 
 Now consider the elements $C = \{c_{i,1},\ldots,c_{i,4}\}$ that are above $b_i$ and $b_{i+1}$ in $P$.
 As $|C| \geq 4$, there are two elements $c_1,c_2$ of $C$ that are both below $a_1,a_2$ in $L$, or both between $a_1$ and $a_2$ in $L$, or both above $a_1,a_2$ in $L$.
 Consider the $2$-rainbow $R$ in the $4$-cycle $[c_1,b_i,c_2,b_{i+1}]$.
 In the first case $R$ is nested below the $4$-cycle $[a_1,b_{i_1},a_2,b_{i_2}]$, in the second case the cover $b_{j_1}\prec a_1$ is nested below $R$ and $R$ is nested below the cover $b_{i_1}\prec a_2$, and in the third case $4$-cycle $[a_1,b_{j_1},a_2,b_{j_2}]$ is nested below $R$.
 As each case results in a $4$-rainbow, we have $\qn(P) \geq 4$.
\end{proof}

Even though Conjecture~\ref{conj:planar-height} has to be refuted in its strongest meaning, it might hold that planar posets of height $h$ have queue-number $O(h)$, or at least bounded by some function $f(h)$ in terms of $h$, or at least that planar posets of height $2$ have bounded queue-number.
As it turns out, all these statements are equivalent, and in turn equivalent to Conjecture~\ref{conj:queue-planar-graphs}.

\begin{theorem}\label{thm:equivalent-conjectures}
 The following statements are equivalent:
 \begin{enumerate}[label = (\roman*)]
  \item Planar graphs have queue-number $O(1)$ (Conjecture~\ref{conj:queue-planar-graphs}).\label{item:planar-graphs}
  \item Planar posets of height $h$ have queue-number $O(h)$.\label{item:height-linear}
  \item Planar posets of height $h$ have queue-number at most $f(h)$ for a function $f$.\label{item:height-any}
  \item Planar posets of height $2$ have queue-number $O(1)$.\label{item:height-2}
  \item Planar bipartite graphs have queue-number $O(1)$.\label{item:planar-bipartite}
 \end{enumerate}
\end{theorem}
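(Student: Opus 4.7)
The plan is to establish the cycle $(\text{ii})\Rightarrow(\text{iii})\Rightarrow(\text{iv})\Rightarrow(\text{v})\Rightarrow(\text{i})\Rightarrow(\text{ii})$, which together with the trivial $(\text{i})\Rightarrow(\text{v})$ yields the full equivalence. Three of these links are immediate: $(\text{ii})\Rightarrow(\text{iii})$ with $f(h)=ch$, $(\text{iii})\Rightarrow(\text{iv})$ by setting $h=2$, and $(\text{i})\Rightarrow(\text{v})$ since planar bipartite graphs are a subclass of planar graphs.

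For $(\text{iv})\Rightarrow(\text{v})$, I would interpret a planar bipartite graph $G$ with bipartition $(X,Y)$ as the cover graph of the height-$2$ poset $P_G$ in which $X$ is the set of minima, $Y$ the set of maxima, and $x<_{P_G}y$ iff $xy\in E(G)$. By Moore's theorem (already cited in the paper) $P_G$ is planar, so by (iv) $\qn(P_G)=O(1)$, and every linear extension of $P_G$ is simultaneously a linear layout of $G$ with queue-number at most $\qn(P_G)$. For $(\text{v})\Rightarrow(\text{i})$, the $1$-subdivision $G'$ of a planar graph $G$ is planar and bipartite, so by (v) $\qn(G')=O(1)$, and by the classical Dujmovi\'c--Wood result bounding $\qn(G)$ by a function of $\qn(G')$ one concludes $\qn(G)=O(1)$.

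The main work is $(\text{i})\Rightarrow(\text{ii})$. Given a planar poset $P$ of height $h$, I would take a $q$-queue layout $L'$ of its planar cover graph $G_P$ (with $q=O(1)$ by (i)) and define $L$ to be the lexicographic ordering of $V(P)$ by $(\mathrm{height}(v),\mathrm{pos}_{L'}(v))$; since $u<_P v$ implies $\mathrm{height}(u)<\mathrm{height}(v)$, this $L$ is a linear extension. For the rainbow analysis, consider any rainbow $u_1<_L\cdots<_L u_r<_L v_r<_L\cdots<_L v_1$ of cover edges $u_i\prec v_i$; then the sequence of heights of the $u_i$ is non-decreasing in $i$ and the sequence of heights of the $v_i$ is non-increasing in $i$, so the pairs $(\mathrm{height}(u_i),\mathrm{height}(v_i))$ take only $O(h)$ distinct values. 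Among edges sharing a fixed endpoint-height-pair, the $u_i$'s and $v_i$'s inherit the $L'$-order within their height classes, and splitting these edges by whether $u_i<_{L'}v_i$ (forward) or $v_i<_{L'}u_i$ (backward) yields two subclasses, each of which pairwise nests in $L'$ and hence has size at most $q$. Summing $\leq 2q$ edges per height-pair over the $O(h)$ distinct pairs gives $r=O(qh)=O(h)$, i.e., $\qn(P)=O(h)$. The principal obstacle here is that cover edges with different ``height-jumps'' $\mathrm{height}(v_i)-\mathrm{height}(u_i)$ can coexist in one rainbow of $L$, so one must separately control the number of distinct endpoint-height-pairs via the nested-intervals observation before applying the $L'$-based forward/backward analysis within each pair.
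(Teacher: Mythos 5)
Your proposal is correct and follows the same skeleton as the paper's proof: the same cycle of implications, with (ii)$\Rightarrow$(iii)$\Rightarrow$(iv) immediate, (iv)$\Rightarrow$(v) via Moore's theorem realizing a planar bipartite graph as the cover graph of a planar height-$2$ poset, and (v)$\Rightarrow$(i) via the Dujmovi\'c--Wood subdivision result (your extra remark that (i)$\Rightarrow$(v) is trivial is redundant for the equivalence but harmless). The one place you genuinely diverge is (i)$\Rightarrow$(ii). The paper uses the identical linear extension --- place the height classes (the antichains obtained by repeatedly deleting minimal elements) consecutively and, inside each class, inherit the queue layout of the cover graph --- but then invokes a lemma of Pemmaraju as a black box: refining a $k$-queue layout by a proper $m$-coloring yields no $(2(m-1)k+1)$-rainbow. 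You instead prove the needed bound from scratch: the monotonicity of the endpoint heights along a rainbow limits the number of distinct height pairs to at most $2h-1$, and within a fixed pair the forward/backward split produces two families of edges each of which is itself a rainbow in the layout $L'$ of the cover graph, hence each of size at most $q$. This argument is correct --- it is, in essence, a self-contained re-derivation of Pemmaraju's lemma specialized to the height coloring --- and it buys independence from the cited thesis result, at the cost of a slightly worse constant, $2q(2h-1)$ versus the paper's $2(h-1)q$, which is immaterial for the $O(h)$ conclusion.
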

\begin{proof}
 \begin{description}
  \item[\ref{item:planar-graphs}$\Rightarrow$\ref{item:height-linear}]
   Pemmaraju proves in his thesis~\cite{Pem-92} (see also~\cite{DPW-04}) that if $G$ is a graph, $\pi$ is a vertex ordering of $G$ with no $(k+1)$-rainbow, $V_1,\ldots,V_m$ are color classes of any proper $m$-coloring of $G$, and $\pi'$ is the vertex ordering with $V_1 <_{\pi'} \cdots <_{\pi'} V_m$, where within each $V_i$ the ordering of $\pi$ is inherited, then $\pi'$ has no $(2(m-1)k+1)$-rainbow.
   So if $P$ is any poset of height $h$, its cover graph $G_P$ has $\qn(G_P) \leq c$ by~\ref{item:planar-graphs} for some global constant $c >0$.
   Splitting $P$ into $h$ antichains $A_1,\ldots,A_h$ by iteratively removing all minimal elements induces a proper $h$-coloring of $G_P$ with color classes $A_1,\ldots,A_h$.
   As every vertex ordering $\pi'$ of $G$ with $A_1 <_{\pi'} \cdots <_{\pi'} A_h$ is a linear extension of $P$, it follows by Pemmaraju's result that $\qn(P) \leq 2(h-1)\qn(G_P) \leq 2ch$, i.e., $\qn(P) \in O(h)$.

  \item[\ref{item:height-linear}$\Rightarrow$\ref{item:height-any}$\Rightarrow$\ref{item:height-2}]
   These implications are immediate.
  
  \item[\ref{item:height-2}$\Rightarrow$\ref{item:planar-bipartite}]
   Moore proves in his thesis~\cite{Moo-75} (see also~\cite{BLR-90}) that if $G$ is a planar and bipartite graph with bipartition classes $A$ and $B$, and $P_G$ is the poset on element set $A \cup B = V(G)$ where $x<y$ if and only if $x \in A, y \in B, xy \in E(G)$, then $P_G$ is a planar poset of height $2$.
   As $G$ is the cover graph of $P_G$, we have $\qn(G) \leq \qn(P_G) \leq c$ for some constant $c >0$ by \ref{item:height-2}, i.e., $\qn(G) \in O(1)$.

  \item[\ref{item:planar-bipartite}$\Rightarrow$\ref{item:planar-graphs}] This is a result of Dujmovi\'c and Wood~\cite{DW-05}.
 \end{description}
 \vspace{-\baselineskip}
\end{proof}

Finally, we show that Conjecture~\ref{conj:planar-height} holds for planar posets with $0$ and $1$.

\begin{proof}[Theorem~\ref{thm:planar-height-0-and-1}]
 Let $P$ be a planar poset with $0$ and $1$.
 Then $P$ has dimension at most two~\cite{BFR-72}, i.e., it can be written as the intersection of two linear extensions of $P$.
 A particular consequence of this is, that there is a well-defined dual poset $P^\star$ in which two distinct elements $x,y$ are comparable in $P$ if and only if they are incomparable in $P^\star$.
 Poset $P^\star$ reflects a ``left of''-relation for each incomparable pair $x \parallel y$ in $P$ in the following sense:
 Any maximal chain $C$ in $P$ corresponds to a $0$-$1$-path $Q$ in $G_P$, which splits the elements of $P\setminus C$ into those left of $Q$ and those right of $Q$.
 Now $x <_{P^\star} y$ if and only if $x$ is left of the path for every maximal chain containing $y$ (equivalently $y$ is right of the path for every maximal chain containing $x$).
 Due to planarity, if $a\prec b$ is a cover in $P$ and $C$ is a maximal chain containing neither $a$ nor $b$, then $a$ and $b$ are on the same side of the path $Q$ corresponding to $C$. In particular, if for $x,y\in C$ we have $a <_{P^\star} x$ and $b \parallel y$, then $b$ and $y$ are comparable in $P^\star$, but if $y <_{P^\star} b$ we would get a crossing of $C$ and $a\prec b$. Also see the left of Figure~\ref{fig:planar-height-UB}. We summarize:

 \begin{enumerate}[label = ($\star$)]
  \item If $a\prec b$, $a <_{P^\star} x$ for some $x \in C$ and $b \parallel y$ for some $y \in C$, then $b <_{P^\star} y$. \label{enum:same-side}
 \end{enumerate}   
 
 \begin{figure}
  \centering
  \includegraphics{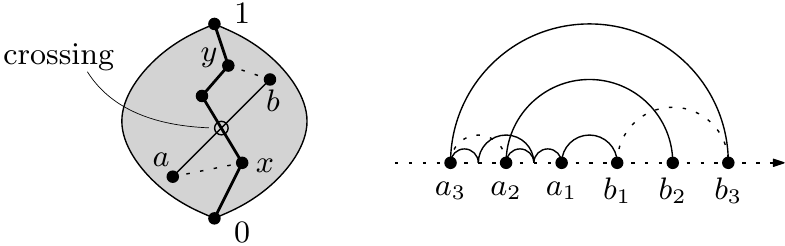}
  \caption{Left: Illustration of~\ref{enum:same-side}: If $a <_{P^\star} x$, $b \parallel y$, $x < y$, and $a\prec b$ is a cover, then $b <_{P^\star} y$ due to planarity. Right: If $a_3 <_L a_2 <_L a_1 <_L b_1 <_L b_2 <_L b_3$ is a $3$-rainbow with $a_2,a_3 < a_1$, then $a_3 < a_2$.}
  \label{fig:planar-height-UB}
 \end{figure}
 
 Now let $L$ be the \emph{leftmost} linear extension of $P$, i.e., the unique linear extension $L$ with the property that for any $x \parallel y$ in $P$ we have $x <_L y$ if and only if $x < y$ in $P^\star$. 
 Assume that $a_2 <_L a_1 <_L b_1 <_L b_2$ is a pair of nesting covers $a_1\prec b_1$ below $a_2\prec b_2$.
 Then $a_1 \parallel a_2$ (hence $a_2 <_{P^\star} a_1$) or $b_1 \parallel b_2$ (hence $b_1 <_{P^\star} b_2$) or both.
 Observe that the latter case is impossible, as for any maximal chain $C$ containing $a_1\prec b_1$ we would have $a_2 <_{P^\star} a_1$ with $a_1 \in C$ and $b_1 <_{P^\star} b_2$ with $b_1 \in C$, contradicting \ref{enum:same-side}.
 So the nesting of $a_1\prec b_1$ below $a_2\prec b_2$ is either of type~A with $a_2 < a_1$, or of type~B with $b_1 < b_2$.
 See Figure~\ref{fig:leftmost-LE}.
 
 \begin{figure}
  \centering
  \includegraphics{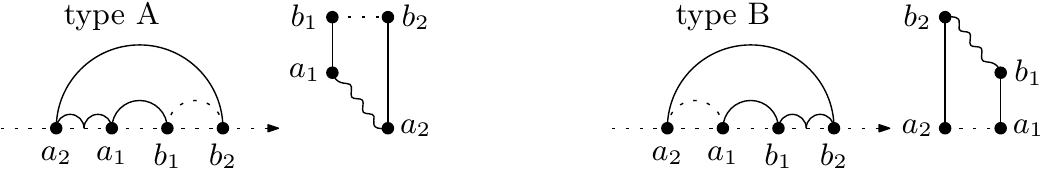}
  \caption{A nesting of $a_1\prec b_1$ below $a_2\prec b_2$ of type~A (left) and type~B (right).}
  \label{fig:leftmost-LE}
 \end{figure}

 Now consider the case that cover $a_2\prec b_2$ is nested below another cover $a_3\prec b_3$, see the right of Figure~\ref{fig:planar-height-UB}.
 Then also $a_1\prec b_1$ is nested below $a_3\prec b_3$ and we claim that if both, the nesting of $a_1\prec b_1$ below $a_2\prec b_2$ as well as the nesting of $a_1\prec b_1$ below $a_3\prec b_3$, are of type~A (respectively type~B), then also the nesting of $a_2\prec b_2$ below $a_3\prec b_3$ is of type~A (respectively type~B).
 Indeed, assuming type~B, we would get $a_3 <_{P^\star} a_2$ and $b_1 <_{P^\star} b_3$, which together with any maximal chain $C$ containing $a_2 < a_1 <b_1$ contradicts~\ref{enum:same-side}.

 Finally, let $a_k <_L \cdots <_L a_1 <_L b_1 <_L \cdots <_L b_k$ be any $k$-rainbow and let $I = \{ i \in [k] \mid a_i < a_1 \}$, i.e., for each $i \in I$ the nesting of $a_1\prec b_1$ below $a_i\prec b_i$ is of type~A.
 Then we have just shown that the nesting of $a_j\prec b_j$ below $a_i\prec b_i$ is of type~A whenever $i,j \in I$ and of type~B whenever $i,j \notin I$.
 Hence, the set $\{a_i \mid i \in I\} \cup \{a_1,b_1\} \cup \{b_i \mid i \notin I\}$ is a chain in $P$ of size $k+1$, and thus $k \leq h-1$.
 It follows that $P$ has queue-number at most $h-1$, as desired. 
\end{proof}

The proof of the following can be found in the appendix.

\begin{proposition}\label{prop:planar-height-LB}
 For each $h$ there exists a planar poset $Q_h$ of height $h$ and queue-number $h-1$.
\end{proposition}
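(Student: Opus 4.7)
The plan is to construct an explicit family $(Q_h)_{h \geq 2}$ inductively, maintaining throughout the construction that each $Q_h$ is a planar poset with a $0$ and a $1$, of height exactly $h$, whose every linear extension contains a rainbow of size at least $h-1$. The upper bound $\qn(Q_h) \leq h-1$ is then immediate from Theorem~\ref{thm:planar-height-0-and-1}, so only the matching lower bound requires work.

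For the base case, I take $Q_2$ to be the two-element chain, which has height $2$, is planar with a $0$ and a $1$, and has queue-number $1$. For the inductive step from $Q_{h-1}$ to $Q_h$, I would adjoin a small, carefully designed ``gadget'' to $Q_{h-1}$ consisting of a new extremum (either a new $0$ with its predecessors or a new $1$ with its successors) together with one additional element anchored by a cover to a specific element of $Q_{h-1}$. The gadget is designed so that (i) the diagram remains planar, (ii) the height rises by exactly one, and (iii) in every linear extension $L$ of $Q_h$ the restriction of $L$ to $Q_{h-1}$ is a linear extension of $Q_{h-1}$, so by induction it contains a rainbow $R$ of size $h-2$, and one of the new cover relations introduced by the gadget is forced to nest around $R$, producing a rainbow of size $h-1$ in $L$.

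To justify (iii) I would argue that the extra comparability from the gadget's new element to a particular element of $Q_{h-1}$ pins its position in $L$ strictly to the left of the leftmost endpoint or strictly to the right of the rightmost endpoint of $R$. In the first case the cover $m \prec 1_h$ (with $m$ the new element) nests around $R$, and in the second case the symmetric cover $0_h \prec m$ does. Either way $L$ contains a rainbow of size $h-1$. Planarity is preserved because the new cover relations are attached on the outer face of the previously constructed planar diagram of $Q_{h-1}$.

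The main obstacle is choosing the anchoring cover so that the adversary cannot place the new element in the interior of $R$; without such an anchor, a clever linear extension would position $m$ inside the outer cover of $R$, making the new covers cross rather than nest with $R$. Resolving this requires connecting $m$ to an element of $Q_{h-1}$ whose position in any linear extension is provably outside of any rainbow realised by $Q_{h-1}$ -- typically an element on the spine realising the height of $Q_{h-1}$ -- while still being compatible with a planar embedding.
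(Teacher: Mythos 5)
Your base case, the reduction of the upper bound to Theorem~\ref{thm:planar-height-0-and-1}, and the observation that any linear extension of $Q_h$ restricts to a linear extension of the subposet $Q_{h-1}$ are all fine. The gap is precisely the ``main obstacle'' you name, and it cannot be resolved inside your single-anchored-element framework. Your gadget needs two properties of the new element $m$ simultaneously: (a) $m \prec 1_h$ (say; the new-$0$ case is symmetric) must be a \emph{cover} relation of $Q_h$, since only cover relations are edges of the cover graph and hence only covers can participate in a rainbow; and (b) $m$ must be forced, in \emph{every} linear extension, before the outermost left endpoint of some guaranteed $(h-2)$-rainbow. For (b) the anchor must be of the form $m < v$ with $v \in Q_{h-1}$ (an anchor $v < m$ gives no upper bound on the position of $m$, so the adversary places $m$ just before $1_h$). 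But your invariant requires $Q_h$ to have a $1$ (this is what makes Theorem~\ref{thm:planar-height-0-and-1} applicable), so $1_h$ lies above all of $Q_{h-1}$, giving $m < v < 1_h$; hence $m \prec 1_h$ is \emph{not} a cover relation and contributes no edge, so (a) fails. Conversely, dropping the anchor restores (a), but then $m$ is incomparable to all of $Q_{h-1}$ and the adversary may place $m$ immediately before $1_h$, so that $m \prec 1_h$ nests over nothing; and the cover $1_{h-1} \prec 1_h$ cannot help either, since $1_{h-1}$ comes after all right endpoints of any rainbow of $Q_{h-1}$. Finally, the element you hope to anchor to -- one ``provably outside of any rainbow realised by $Q_{h-1}$ in any linear extension'' -- does not exist in a usable form: rainbows are adversary-dependent, an element incomparable to candidate rainbow endpoints can be ordered on either side of them, and comparability to all of them either is impossible (once $Q_{h-1}$ has a $0$ and $1$) or blows up the height.

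The paper's proof avoids this trap with a genuinely different mechanism: duplication and symmetry instead of anchoring, growing the rainbow \emph{inward} rather than outward. Each $Q_h$ carries marked three-element ``V-posets'' (a minimal element $y$ covered by two incomparable elements $x,z$), pairwise incomparable, and $Q_h$ is built from $Q_{h-1}$ by replacing every V-poset with an eight-element gadget containing \emph{two} new V-posets. The inductive invariant is strengthened to locate the rainbow: every linear extension admits an $(h-2)$-rainbow whose \emph{innermost} cover lies inside a V-poset $V$ (with additional control of the second-innermost cover via the element $w(V)$). Then, whichever of the two new minimal elements $y_1,y_2$ comes first in the given linear extension, a cover incident to the earlier one nests over the entire V-poset containing the later one, so the old innermost cover can be replaced by two nested covers, yielding an $(h-1)$-rainbow. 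No position needs to be pinned from outside; the adversary's choice between the two symmetric copies is harmless. If you want to salvage your outline, you need this kind of two-sided redundancy placed at the innermost cover of the rainbow, not a single element anchored outside it.
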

\section{Conclusions}
We studied the queue-number of (planar) posets of bounded height and width. Two main problems remain open: bounding the queue-number by the width and bounding it by a function of the height in the planar case, where the latter is equivalent to the central conjecture in the area of queue-numbers of graphs. For the first problem the biggest class known to satisfy it are posets without the embedded the subdivided $k$-crowns for $k\geq 2$ as defined in Section~\ref{sec:general-posets}. Note, that proving it for $k\geq 3$ would imply that Conjecture~\ref{conj:general-width} holds for all $2$-dimensional posets, which seems to be a natural next step. 

Let us close the paper by recalling another interesting conjecture from~\cite{HP-97}, which we would like to see progress in:
\begin{conjecture}[Heath and Pemmaraju~\cite{HP-97}]\label{conj:planar-vertices}{\ \\}
 Every planar poset on $n$ elements has queue-number at most $\lceil \sqrt{n} \rceil$.
\end{conjecture}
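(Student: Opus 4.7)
The plan is to exploit the pigeonhole identity $\min(w, h) \leq \lceil \sqrt{n} \rceil$, which follows from Mirsky's (or Dilworth's) theorem via $n \leq wh$. For planar posets that happen to carry a $0$ and a $1$, this already closes the conjecture using results of this paper: Corollary~\ref{cor:interval} gives $\qn(P) \leq w$ and Theorem~\ref{thm:planar-height-0-and-1} gives $\qn(P) \leq h-1$, so $\qn(P) \leq \min(w, h-1) \leq \lceil \sqrt{n} \rceil$. The entire task therefore reduces to extending this bound to planar posets without a global minimum or maximum.

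For a general planar poset $P$ I would pursue a width/height dichotomy. In the narrow regime $w \leq \lceil \sqrt{n} \rceil$ the goal is to push Theorem~\ref{thm:planar-width} from $3w-2$ down to $w$, i.e.\ to verify Conjecture~\ref{conj:general-width} for planar posets; a natural route is to adapt Theorem~\ref{thm:subdivided-crowns} by ruling out the offending embedded subdivided crowns using planarity, or to reexamine the proof of Theorem~\ref{thm:planar-width} and fold the $2w-2$ ``face edge'' queues into the $w$ queues spent on the augmented poset~$Q$. In the flat regime $h \leq \lceil \sqrt{n} \rceil$ the plan is to attach artificial extrema $\hat 0$ and $\hat 1$ planarly: after the face augmentation used in the proof of Theorem~\ref{thm:planar-width} every inner face has a unique source and sink, so one routes $y$-monotone, pairwise non-crossing curves from $\hat 0$ (respectively $\hat 1$) to each minimum (maximum), introducing at most $O(\sqrt{n})$ auxiliary subdivision elements if necessary. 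Theorem~\ref{thm:planar-height-0-and-1} would then apply to the augmented poset with essentially unchanged $n$.

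The main obstacle is the flat regime. Theorem~\ref{thm:planar-height-2} shows that $\qn(P)$ is \emph{not} bounded by any function of $h$ alone for general planar posets (unconditionally on Conjecture~\ref{conj:queue-planar-graphs}), so the $h-1$ bound from Theorem~\ref{thm:planar-height-0-and-1} is genuinely fragile with respect to removing $0$ or $1$; any planar augmentation step must be cheap in the precise sense that the added elements contribute at most $\sqrt{n}$ to the new element count, and the added cover relations must not blow up the queue-number by more than an additive $\sqrt{n}$ either. Equivalently, one has to control the ``planar $\hat 0$-$\hat 1$ deficiency'' of $P$ by the square root of its size. It is exactly this delicate interplay between the two earlier Heath--Pemmaraju conjectures --- one of them now known to be false in its strong form --- that has kept Conjecture~\ref{conj:planar-vertices} open, and any proof will likely require a structural planarity argument specific to posets without $0$ and $1$, rather than a direct combination of the existing width and height bounds.
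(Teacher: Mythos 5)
First, a point of calibration: the statement you set out to prove is not a theorem of this paper. Conjecture~\ref{conj:planar-vertices} is recalled in the Conclusions precisely as an \emph{open} problem of Heath and Pemmaraju, and the paper offers no proof of it, so there is no ``paper proof'' to compare against and any complete argument would be a substantial new result. Your proposal is, to its credit, explicit about being a programme rather than a proof, and its one fully correct ingredient is a nice observation: since $n \leq wh$ (Mirsky/Dilworth), we have $\min(w,h)\leq\lceil\sqrt{n}\rceil$, so for planar posets \emph{with $0$ and $1$} the conjecture indeed follows by combining Corollary~\ref{cor:interval} ($\qn(P)\leq w$) with Theorem~\ref{thm:planar-height-0-and-1} ($\qn(P)\leq h-1$).

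Beyond that special case, however, both branches of your dichotomy run into exactly the open problems this paper isolates, and the flat regime contains a concrete error. In the narrow regime ($w\leq\lceil\sqrt{n}\rceil$) you need Conjecture~\ref{conj:general-width} for planar posets; Theorem~\ref{thm:planar-width} only gives $3w-2$, which loses a constant factor you cannot afford, and ``folding the $2w-2$ extra queues into the other $w$'' is merely a restatement of that open problem. In the flat regime, planarly attaching $\hat 0$ and $\hat 1$ is obstructed in principle, not just technically: a planar poset with $0$ and $1$ has dimension at most two (the fact~\cite{BFR-72} on which the proof of Theorem~\ref{thm:planar-height-0-and-1} rests), whereas general planar posets need not; a minimum enclosed in a bounded face cannot be joined to a global $\hat 0$ by a crossing-free curve, and subdividing does not remove a crossing (an element placed at a crossing point merges two covers and creates new comparabilities inside $P$). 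Moreover, if such a cheap augmentation existed for height-$2$ planar posets, they would have bounded queue-number, which by Theorem~\ref{thm:equivalent-conjectures} already implies Conjecture~\ref{conj:queue-planar-graphs}; so this single step secretly contains the central open problem of the area. Relatedly, your assertion that Theorem~\ref{thm:planar-height-2} shows $\qn$ is not bounded by \emph{any} function of $h$ ``unconditionally'' is wrong: that theorem refutes only the bound $h$ itself (one height-$2$ poset with queue-number at least $4$); whether some $f(h)$ works is open and, again by Theorem~\ref{thm:equivalent-conjectures}, equivalent to Conjecture~\ref{conj:queue-planar-graphs}. Finally, even if both regimes went through, the bookkeeping (subdivision elements, destroyed covers, $h'-1$ in place of $h$) would yield a bound of order $O(\sqrt{n})$, not the conjectured $\lceil\sqrt{n}\rceil$.
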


\bibliographystyle{abbrv}
\bibliography{lit}

\section{Appendix}

\begin{proof}[Proposition~\ref{prop:planar-height-LB}]
  We shall recursively define a planar poset $Q_h$ of height $h$ and queue-number $h-1$, together with a certain set of marked subposets in $Q_h$.
 Each marked subposet consists of three elements $x,y,z$ forming a \emph{V-subposet} in $Q_h$, i.e., $y < x,z$ but $x \parallel z$, with both relations $y < x$ and $y < z$ being cover relations of $Q_h$, and $y$ being a minimal element of $Q_h$.
 We call such a marked subposet in $Q_h$ a V-poset.
 Finally, we ensure that the V-posets are pairwise incomparable, namely that any two elements in distinct V-posets are incomparable in $Q_h$.
 
 For $h=2$ let $Q_2$ be the three-element poset as shown in left of Figure~\ref{fig:planar-height-LB}, which also forms the only V-poset of $Q_2$.
 Clearly $Q_2$ has height~$2$ and queue-number~$1$.
 For $h \geq 3$ assume that we already constructed $Q_{h-1}$ with a number of V-posets in it.
 Then $Q_h$ is obtained from $Q_{h-1}$ by replacing each V-poset by the eight-element poset shown in the right of Figure~\ref{fig:planar-height-LB}, which introduces (for each V-poset) five new elements.
 Moreover, two new V-posets are identified in $Q_h$ as illustrated in Figure~\ref{fig:planar-height-LB}.
 
 \begin{figure}
  \centering
  \includegraphics{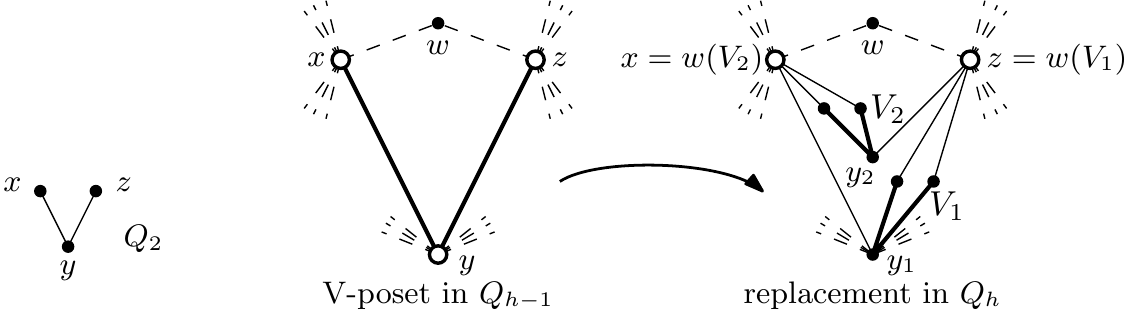}
  \caption{Constructing planar posets of height $h$ and queue-number $h-1$.
   Left: $Q_2$ is a three-element poset and its only V-poset.
   Right: $Q_h$ is recursively defined from $Q_{h-1}$ by replacing each V-poset by an eight-element poset and identifying two new V-posets.}
  \label{fig:planar-height-LB}
 \end{figure}

 It is easy to check that $Q_h$ is planar and has height $h$, since $Q_{h-1}$ has height $h-1$ and the V-posets in $Q_{h-1}$ are pairwise incomparable.
 Moreover, every V-poset in $Q_h$ contains a minimal element of $Q_h$ and all V-posets in $Q_h$ are pairwise incomparable.
 Finally, observe that, as long as $h \geq 3$, for every V-poset $V$ in $Q_h$ there is a unique smallest element $w = w(V)$ that is larger than all elements in $V$, see the right of Figure~\ref{fig:planar-height-LB}.

 In order to show that $\qn(Q_h) \geq h-1$, we shall show by induction on $h$ that for every linear extension $L$ of $Q_h$ there exists a $(h-1)$-rainbow in $Q_h$ with respect to $L$ whose innermost cover is contained in a V-poset $V$ of $Q_h$, and, if $h \geq 3$, whose second innermost cover has the element $w(V)$ as its upper end.
 This clearly holds for $h=2$.
 For $h \geq 3$, consider any linear extension $L$ of $Q_h$.
 This induces a linear extension $L'$ of $Q_{h-1}$ as follows:
 The set $X$ of elements in $Q_h$ not contained in any V-poset is also a subset of the elements in $Q_{h-1}$.
 The remaining elements of $Q_{h-1}$ are the minimal elements of the V-posets in $Q_{h-1}$.
 For each minimal element $y$ of $Q_{h-1}$ consider the two corresponding V-posets in $Q_h$ with its two corresponding minimal elements $y_1,y_2$.
 Let $\hat{y} \in \{y_1,y_2\}$ be the element that comes first in $L$, i.e., $\hat{y} = y_1$ if and only if $y_1 <_L y_2$.
 Then we define $L'$ to be the ordering of $Q_{h-1}$ induced by the ordering of $X \cup \{\hat{y} \mid y \in Q_{h-1} - X\}$ in $L$.
 Note that $L'$ is a linear extension of $Q_{h-1}$, even though $X \cup \{\hat{y} \mid y \in Q_{h-1} - X\}$ does not necessarily induce a copy of $Q_{h-1}$ in $Q_h$.

 By induction on $Q_{h-1}$ there exists a $(h-2)$-rainbow $R$ with respect to $L'$ whose innermost cover is contained in a V-poset $V$ and, provided that $h-1 \geq 3$, its second innermost cover has $w = w(V)$ as its upper end.
 Consider the elements $x,y,z$ of $V$ with $y$ being the minimal element, and the two corresponding V-posets $V_1,V_2$ with minimal elements $y_1,y_2$ of $Q_h$, where $y_1x$ and $y_2z$ are covers; see Figure~\ref{fig:planar-height-LB}.
 By definition of $\hat{y}$ and $L'$, all elements of $\{x,y\} \cup V_1 \cup V_2$ lie between $\hat{y}$ (included) and $w$ (excluded, if $h-1 \geq 3$) with respect to $L$.
 
 Assume without loss of generality that $x <_L z$.
 If $y_2 <_L y_1$ ($\hat{y} = y_2$), then the V-poset with $y_1$ is nested completely under the cover $y_2z$ and replacing in $R$ the innermost cover by the cover $y_2z$ and any cover with $y_1$ gives a $(h-1)$-rainbow with the desired properties.
 If $y_1 <_L y_2$ ($\hat{y} = y_1$), then the V-poset with $y_2$ is nested completely under the cover $y_1x$ and replacing in $R$ the innermost cover by the cover $y_1x$ and any cover with $y_w$ gives a $(h-1)$-rainbow with the desired properties, which concludes the proof.
\end{proof}

\end{document}